\documentclass{amsart}

\usepackage{amsmath, amsthm, amssymb} 
\usepackage[colorlinks=true, allcolors=blue]{hyperref} 
\usepackage[capitalize, nameinlink, noabbrev]{cleveref} 
\usepackage{asymptote} 
\usepackage{dsfont} 
\usepackage{mathtools} 

\newtheorem{theorem}{Theorem}[section]
\newtheorem{lemma}[theorem]{Lemma}
\newtheorem{claim}[theorem]{Claim}
\newtheorem{corollary}[theorem]{Corollary}
\newtheorem{fact}[theorem]{Fact}

\allowdisplaybreaks 

\begin{asydef}
    texpreamble("\usepackage{amsmath}");

    bool contains(int[] arr, int value) {
        for (int i = 0; i < arr.length; ++i) {
            if (arr[i] == value) {
                return true;
            }
        }
        return false;
    }
    
    // cycles through the rainbow as r varies from 0 to 1
    pen rainbow(real r, real fading = 0) {
        r = 6 * ((r+1/12) 
        
        real s = r 
        pen c;
        if (r < 1) {
            c = rgb(1, s, 0);
        } else if (r < 2) {
            c = rgb(1-s, 1, 0);
        } else if (r < 3) {
            c = rgb(0, 1, s);
        } else if (r < 4) {
            c = rgb(0, 1-s, 1);
        } else if (r < 5) {
            c = rgb(s, 0, 1);
        } else {
            c = rgb(1, 0, 1-s);
        }
        return interp(c, white, fading);
    }

    string ordinal(int n) {
        if (n == 1) {
            return "$1^\text{st}$";
        } else if (n == 2) {
            return "$2^\text{nd}$";
        } else if (n == 3) {
            return "$3^\text{rd}$";
        } else if (n == 4) {
            return "$4^\text{th}$";
        } else if (n == 5) {
            return "$5^\text{th}$";
        } else {
            return "?";
        }
    }

    // pictoral representation of a cycle
    // p = cycle, c = center, s = radius
    void drawcycle(int[] p, int[] colors, pair c = (0, 0), real s = 1, bool card = false, real labelsize=1, bool colorboundary = false, int[] special = new int[] {}) {
        int n = p.length;
        int r = 6;
        for (int i = 0; i < n; ++i) {
            draw(arc(c, s, i*360/n+7/s, (i+1)*360/n-7/s), ArcArrow);
            pair P = c+s*dir(i*360/n);
            if (contains(special, p[i])) {
                fill(circle(P, 0.12), black);
            }
            fill(circle(P, 1/10), rainbow(colors[i]/r, 2/3));
            if (colorboundary) {
                draw(circle(P, 1/10), rainbow(colors[i]/r));
            } else {
                draw(circle(P, 1/10));
                
            }
            if (card) {
                label(scale(labelsize)*ordinal(p[i]), P);
            } else {
                label(scale(labelsize)*string(p[i]), P);
            }
        }
    }

    // vertex of the six-vertex graph
    pair vertex(int n) {
        return dir(360*n/6);
    }

    void drawvertices() {
        for (int i = 0; i < 6; ++i) {
            fill(circle(vertex(i), 0.1), rainbow(i/6, 1/3));
        }
    }
    
    // draws edges of the six-vertex graph
    path drawedge(int a, int b, pair da = dir(vertex(b)-vertex(a)), pair db = dir(vertex(a)-vertex(b)), bool isthick = false) {
        pair A = vertex(a);
        pair B = vertex(b);
        pair A1 = A + 0.18*da;
        pair B1 = B + 0.18*db;
        pair A2 = A + 0.3*da;
        pair B2 = B + 0.3*db;
        path p = A1 -- A2{da} .. tension 0.75 .. B2{db} -- B1;
        if (isthick) {
            draw(p, linewidth(2), ArcArrow(4bp));
        } else {
            draw(p, ArcArrow);
        }
        return p;
    }

    void edgelabel1(int a, int b, int l, pair da = dir(vertex(b)-vertex(a)), pair db = dir(vertex(a)-vertex(b))) {
        path p = drawedge(a, b, da, db);
        real along = 0.4;
        pair M = relpoint(p, along);
        real eps = 0.01;
        pair P = dir(relpoint(p, along+eps)-relpoint(p, along-eps));
        pair s = 0.075;
        label(scale(0.4)*string(l), M + 2/3*s*P*dir(90));
    }

    void edgelabel2(int a, int b, int l, pair da = dir(vertex(b)-vertex(a)), pair db = dir(vertex(a)-vertex(b))) {
        path p = drawedge(a, b, da, db);
        real along = 0.4;
        pair M = relpoint(p, along);
        real eps = 0.01;
        pair P = dir(relpoint(p, along+eps)-relpoint(p, along-eps));
        pair s = 0.075;
        label(scale(0.4)*string(l), M - s*P*dir(90));
        fill(circle(M - s*P*dir(90), 0.06), lightgray);
        draw(circle(M - s*P*dir(90), 0.06), gray);
    }
    
    // edge of the six-vertex graph
    void edgelabels(int a, int b, int l1, int l2, pair da = dir(vertex(b)-vertex(a)), pair db = dir(vertex(a)-vertex(b))) {
        edgelabel1(a, b, l1, da, db);
        edgelabel2(a, b, l2, da, db);
    }

    void connector(int a, pair d1, pair d2) {
        pair P = vertex(a);
        pair A1 = P + 0.1*d1;
        pair A2 = P + 0.16*d1;
        pair B1 = P + 0.1*d2;
        pair B2 = P + 0.16*d2;
        path p = A2 -- A1{-d1} .. tension 0.75 .. B1{-d2} -- B2;
        draw(p);
        draw(B2+1/25*d2*dir(135) -- B2 -- B2+1/25*d2*dir(-135));
    }

    void exitcounter(int a, int exit, pair d) {
        string l = ordinal(exit);
        pair C = vertex(a)+0.21*d;
        fill(circle(C, 0.05), rainbow(a/6, 2/3));
        draw(circle(C, 0.05), rainbow(a/6));
        label(scale(1/3)*l, C);
    }

    void numbercircle(int c, int l, pair P) {
        fill(circle(P, 1/10), rainbow(c/6, 2/3));
        draw(circle(P, 1/10), rainbow(c/6));
        label(scale(2/3)*ordinal(l), P);
    }

    void rectangle(pair A, pair B) {
        draw((A.x, A.y) -- (A.x, B.y) -- (B.x, B.y) -- (B.x, A.y) -- cycle);
    }
\end{asydef}

\title{Coalescence Probabilities of Cycle Products}
\author{Holden Mui}
\date{\today}

\begin{document}

\begin{abstract} 
    Generalizing a formula of Stanley, we prove combinatorially that the probability that $1, 2, \dots, k$ are contained in the same cycle of a product of two random $n$-cycles is
    \[\frac{1}{k} + \frac{4 (-1)^n}{ \binom{2k}{k}} \sum_{\substack{1 \leq i \leq k-1 \\ i \not\equiv n \bmod 2}} \binom{2k-1}{k+i} \left(\frac{1}{n+i+1} - \frac{1}{n-i}\right).\]
\end{abstract}

\maketitle

\section{Introduction}
The study of cycle products originates in the 1980s with seminal papers from Boccara and Stanley. Boccara \cite{Boc80} used analytic techniques to obtain an integral formula for the number of ways to factor a permutation into two cycles. More generally, Stanley \cite{Sta81} used the character theory of the symmetric group to compute the number of ways to factor a permutation into any given number of cycles. These results sparked several decades of research into understanding permutation products via cycles.

One well-studied question in this domain asks about the cycle count of a product of two cycles. Kwak and Lee \cite{KL93} used a character-theoretic result of Jackson \cite{Jac88} to show that the probability that a product of two random $n$-cycles has $\nu$ cycles is
\[\begin{cases}
    \displaystyle 2 \Pr_{\sigma \in S_{n+1}}[c(\sigma) = \nu] & \nu \equiv n \bmod 2 \\
    0 & \nu \not\equiv n \bmod 2,
\end{cases}\]
where $c(\sigma)$ denotes $\sigma$'s cycle count. This formula was also obtained independently by Zagier \cite{Zag95} via the representation theory of the symmetric group and Stanley \cite{Sta11} via Schur functions, power sum symmetric functions, and character theory.

However, combinatorial proofs of the above results remained elusive until Cangelmi \cite{Can03} resolved the $\nu=1$ case. In particular, Cangelmi proved bijectively that for odd $n$, the probability that a product of two random $n$-cycles is an $n$-cycle is $\smash{\frac{2}{n+1}}$. The general $\nu$ case was resolved combinatorially by F\'eray and Vassilieva \cite{FV10}, and other combinatorial proofs were given in \cite{FV12} and \cite{CR16}.

In addition to studying the cycle count of a product of two cycles, one can also study coalescence and separation probabilities. B\'{o}na and Flynn \cite{BF09} conjectured that the probability that 1 and 2 are in the same cycle of a product of two random $n$-cycles is $\frac{1}{2}$ for odd $n$, and asked about the value for even $n$. Their conjecture was resolved when Stanley \cite{Sta11} showed that this probability is
\[
\begin{cases}
    \frac{1}{2} & \text{$n \geq 2$ odd} \\
    \frac{1}{2} - \frac{2}{(n-1)(n+2)} & \text{$n \geq 2$ even}
\end{cases}
\]
using analytic methods and a formula of Bocarra \cite{Boc80}.
Additionally, Stanley also showed that the probability 1, 2, and 3 are in the same cycle of a product of two random $n$-cycles is
\[
\begin{cases}
    \frac{1}{3} + \frac{1}{(n-2)(n+3)} & \text{$n \geq 3$ odd} \\
    \frac{1}{3} - \frac{3}{(n-1)(n+2)} & \text{$n \geq 3$ even}.
\end{cases}
\]
For the general case, B\'{o}na and Pittel \cite{BP21} use a character-based Fourier transform  to show that the probability that $1, 2, \dots, k$ are all contained in the same cycle of a product of two random $n$-cycles is
\[\frac{1}{k} - \frac{1}{n(n+1)} - \frac{(-1)^{k}}{\binom{n-1}{k-1}} \sum_{i=0}^{n-k} (-1)^i \binom{n-1}{i} \frac{1}{i+k+1}\]
for $k \leq n$. They also show that their expression is a rational function in $n(n+1)$ for $2 \leq n \leq 5$ but state that there is no discernible behavior otherwise.

On the other hand, Bernardi, Du, Morales, and Stanley \cite{BDMS14} studied separation probabilities; they showed that the probability that $1, 2, \dots, k$ are all contained in different cycles in a product of two random $n$-cycles is
\[\begin{cases}
    \frac{1}{k!} & n \not\equiv k \bmod 2 \\
    \frac{1}{k!} + \frac{2}{(k-2)! (n-k+1)(n+k)} & n \equiv k \bmod 2.
\end{cases}\]
Notably, their proof is combinatorial and does not rely on the machinery of the character-based Fourier transform. This begs the following question: can B\'ona and Pittel's result also be obtained combinatorially?

Inspired by Bernardi's \cite{Ber12} presentation of Lass' \cite{Lass01} proof of the Harer-Zagier formula, this paper uses combinatorial methods to obtain a formula for the probability that $1, 2, \ldots, k$ are all contained in the same cycle of a product of two random $n$-cycles. The formula is algebraically equivalent to Bon\'a and Pittel's result but is written so that the behavior for general $n$ is discernible. Similar to \cite{BDMS14} and \cite{SV08}, the proof uses a diagrammatic framework for obtaining the permutation statistics of cycle products combinatorially via a bijection to pairs of Eulerian circuits on graphs. An extension of this bijection is used to obtain the result of this paper:
\begin{theorem}\label{theorem:coalescence}
    Let $k$ and $n$ be positive integers with $k \leq n$. The probability that $1, 2, \dots, k$ are contained in the same cycle of a product of two random $n$-cycles is
    \[\frac{1}{k} + \frac{4 (-1)^n}{ \binom{2k}{k}} \sum_{\substack{1 \leq i \leq k-1 \\ i \not\equiv n \bmod 2}} \binom{2k-1}{k+i} \left(\frac{1}{n+i+1} - \frac{1}{n-i}\right).\]
\end{theorem}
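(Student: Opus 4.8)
The plan is to recast the statement in the diagrammatic model underlying the ``pairs of Eulerian circuits'' framework mentioned in the introduction, and then to dissect the resulting object along the cycle that is supposed to contain $1,\dots,k$. Concretely, I would encode a pair of $n$-cycles $(\alpha,\beta)$ as the associated bipartite map carrying $n$ labelled edges, with one black vertex of degree $n$ and one white vertex of degree $n$ whose rotations are read off from $\alpha$ and $\beta$; its faces then correspond to the cycles of the product, and Euler's relation $2-n+F=2-2g$ forces the number of faces to satisfy $F\equiv n\bmod 2$. In this dictionary the probability in question equals $\frac{1}{((n-1)!)^2}$ times the number of such maps in which the $k$ edges labelled $1,\dots,k$ all border a common face. (Because $\alpha\beta$ has a conjugation-invariant distribution, it is harmless to pass between ``same cycle of $\alpha\beta$'' and ``same face of the map'' for whichever product the chosen map convention produces.)

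The core of the argument is a dissection of such a map along its distinguished face $F$. Traversing $\partial F$, the edges $1,\dots,k$ occur in some cyclic pattern that subdivides $\partial F$ into arcs; I would excise a neighbourhood of these edges, splitting the data into (i) a ``gadget'' supported on the $2k$ marked points thereby created, which records only how $1,\dots,k$ attach to $F$ and to one another and carries a single integer parameter $i$, and (ii) a residual object of the same combinatorial type --- one black vertex, one white vertex, and the remaining edges --- whose only constraints are its size and the face-incidences inherited from the cut. Summing over the residual object for a fixed gadget should make the two enumerations factor: the gadget count yields $\binom{2k-1}{k+i}$, with the parity restriction $i\not\equiv n\bmod 2$ dropping out of the $F\equiv n\bmod 2$ condition through Euler's relation; the residual sum simplifies to $\frac{1}{n+i+1}-\frac{1}{n-i}$; and the bookkeeping that turns counts into a probability, together with a sign $(-1)^n$ from the genus alternation in the residual count, supplies the outer factor $\frac{4(-1)^n}{\binom{2k}{k}}$. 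The degenerate base case of the dissection supplies the leading term $\frac1k$. Assembling the pieces yields the claimed formula, and I would finish by checking it against Stanley's $k=1,2,3$ values and recovering B\'ona--Pittel's expression by a routine rational-function identity.

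The main obstacle is carrying out the dissection as an honest bijection. The delicate points are: (a) defining the gadget so that its complement is a genuinely \emph{unconstrained} residual map of the same type, with face-incidence bookkeeping simple enough that the residual sum can be evaluated in closed form --- this is exactly where the earlier diagrammatic bijection must be extended; (b) proving that the gadgets with parameter $i$ are enumerated by $\binom{2k-1}{k+i}$ and, crucially, that $i$ interacts with the Euler characteristic so as to impose $i\not\equiv n\bmod 2$ rather than some other constraint; and (c) verifying that the residual sum collapses to the clean difference $\frac{1}{n+i+1}-\frac{1}{n-i}$ rather than a messier rational function of $n$. I expect (a) to carry essentially all the genuine difficulty, with (b) and (c) delicate but mechanical once the right gadget and size-parametrisation are in hand.
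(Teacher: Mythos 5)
There is a genuine gap: what you have written is a blueprint whose load-bearing components are exactly the things left unproved. The dissection of the map along the distinguished face is never defined as an actual bijection; the claim that the complement of the ``gadget'' is an unconstrained map of the same type, the claim that the gadgets with parameter $i$ are counted by $\binom{2k-1}{k+i}$, and the claim that the residual sum collapses to $\frac{1}{n+i+1}-\frac{1}{n-i}$ are all read off from the shape of the target formula rather than derived, and you say yourself that point (a) carries ``essentially all the genuine difficulty.'' A specific reason to doubt the plan as stated: the number of bipartite maps (equivalently, pairs of $n$-cycles) with a prescribed incidence property is a plain nonnegative count, so no dissection of it can by itself produce the alternating factor $(-1)^n$ or the parity restriction $i\not\equiv n\bmod 2$; ``genus alternation in the residual count'' is not a mechanism unless you introduce a signed weighting, and you do not. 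In the paper these signs come from an inclusion--exclusion over colorings of the cycles of $\sigma\tau$ (\cref{lemma:coalescence}, obtained by evaluating a polynomial identity at $q=-1$, with \cref{claim:cycleparity} supplying the $(-1)^n$), and the terms $\frac{1}{n+i+1}-\frac{1}{n-i}$ appear only at the very end as the partial-fraction decomposition of a complicated rational function, after the colored-cycle counts of \cref{corollary:coloredcyclesubset} are pushed through the four binomial identities \cref{lemma:identity1}--\cref{lemma:identity4}; they are not the enumeration of any single ``residual'' family. There is also a smaller imprecision: in the one-black-vertex/one-white-vertex map model, ``the edges $1,\dots,k$ border a common face'' is weaker than ``$1,\dots,k$ lie in the same cycle of the product,'' since an edge borders two faces; you would need to mark specific corners, which matters once you start cutting along $\partial F$.

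For comparison, the paper does use an Eulerian-tour picture, but in a different place and for a different purpose: a BEST-theorem/Pr\"ufer-style bijection (\cref{theorem:bijection}) counts $(s_1,\dots,s_r)$-colored $n$-cycles, giving $\frac{n!}{n-r+1}$ per composition and ultimately the closed form for $t$-colored $k$-subsets of $r$-colored $n$-cycles; the coalescence probability is then extracted algebraically. If you want to pursue your route, the missing idea you must supply is a signed or inclusion--exclusion structure (something playing the role of the $q=-1$ evaluation) that makes the gadget/residual factorization even plausible, and then an honest proof of the two enumerations you currently assert.
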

Notably, this formula is written in a way that makes it clear that this coalescence probability is a rational function in $n(n+1)$ for each possible parity of $n$. Furthermore, it shows that B\'ona and Pittel's result indeed exhibits ``discernibly regular behavior,'' and it resolves a specific case of a conjecture by Stanley \cite[p.~53]{Sta10}. These expansions, for small values of $k$, are given in \cref{table:coalescence}.

\begin{table}[ht]
    \centering
    \renewcommand{\arraystretch}{1.5}
    \begin{tabular}{c|cc}
        & $n$ even & $n$ odd \\ \hline
        $k=1$ & $1$ & $1$ \\
        $k=2$ & $\frac{1}{2} - \frac{2/3}{n-1} + \frac{2/3}{n+2}$ & $\frac{1}{2}$ \\
        $k=3$ & $\frac{1}{3} - \frac{1}{n-1} + \frac{1}{n+2}$ & $\frac{1}{3} + \frac{1/5}{n-2} - \frac{1/5}{n+3}$ \\
        $k=4$ & $\frac{1}{4} - \frac{2/35}{n-3} - \frac{6/5}{n-1} + \frac{6/5}{n+2} + \frac{2/35}{n+4}$ & $\frac{1}{4} + \frac{2/5}{n-2} - \frac{2/5}{n+3}$ \\
        $k=5$ & $\frac{1}{5} - \frac{1/7}{n-3} - \frac{4/3}{n-1} + \frac{4/3}{n+2} + \frac{1/7}{n+4}$ & $\frac{1}{5} + \frac{1/63}{n-4} + \frac{4/7}{n-2} - \frac{4/7}{n+3} - \frac{1/63}{n+5}$
    \end{tabular}
    \vspace{\baselineskip}
    \caption{Coalescence probabilities (\cref{theorem:coalescence}).}
    \label{table:coalescence}
\end{table}

\subsection*{Outline}
\cref{section:setup} defines colored subsets of colored cycles and outlines the method of attack. \cref{section:bijection} gives a bijection between colored cycles and pairs of Eulerian tours on directed graphs, then extends the bijection to colored subsets of colored cycles. The main result, \cref{theorem:coalescence}, is proved in \cref{section:coalescence} via the bijection and extensive combinatorial manipulations on the resulting expression. Finally, future directions are given in \cref{section:future}. 

\section{The setup}\label{section:setup}
Fix a positive integer $n$. By relabeling, the cycle statistics of a product of $n$-cycles are identical to the cycle statistics of $\sigma \tau$, where $\sigma$ is a random $n$-cycle and $\tau = (n \dots 21)$. This motivates the definition of an \emph{$r$-colored $n$-cycle}, which consists of two pieces of data:
\begin{itemize}
    \item an $n$-cycle $\sigma$, and
    \item a coloring of $\{1, 2, \dots, n\}$ using colors indexed by $\{1, 2, \dots, r\}$.
\end{itemize}
The $n$-cycle $\sigma$ and the coloring must satisfy the following conditions:
\begin{itemize}
    \item every color must be used at least once, and
    \item every cycle in $\sigma \tau$ must be monochromatic.
\end{itemize}
An example of a 6-colored 16-cycle is depicted in \cref{figure:coloredcycle}.

\begin{figure}[ht]
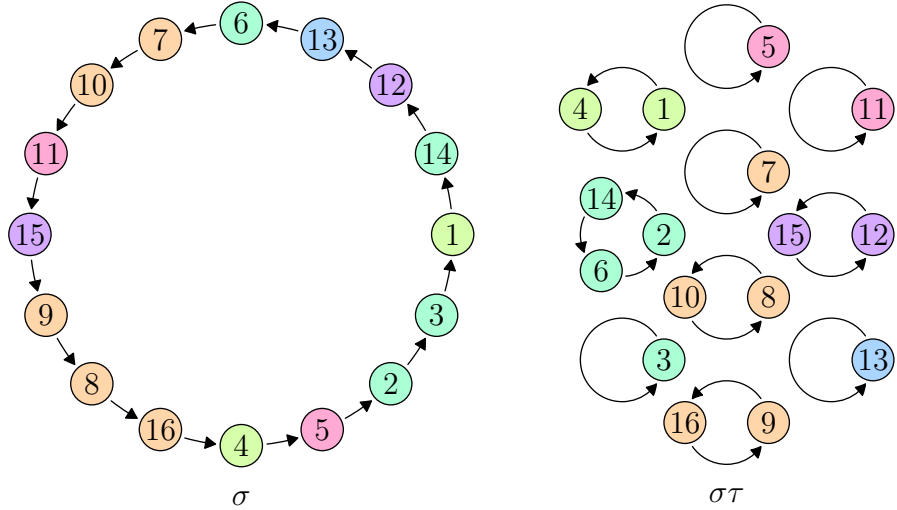

    \centering
    \begin{minipage}{0.49\textwidth}
        \centering
        \begin{asy}[width=\textwidth]
            int p[] = {1, 14, 12, 13, 6, 7, 10, 11, 15, 9, 8, 16, 4, 5, 2, 3};
            int colors[] = {1, 2, 4, 3, 2, 0, 0, 5, 4, 0, 0, 0, 1, 5, 2, 2};
            drawcycle(p, colors);

            label("$\sigma$", (0, -5/4));
        \end{asy}
    \end{minipage} \hfill
    \begin{minipage}{0.49\textwidth}
        \centering
        \begin{asy}[width=0.72\textwidth]
            pair[] locations = {(-1/2, 3/5), (-1/2, 0), (-1/2, -3/5), (0, 9/10), (0, 3/10), (0, -3/10), (0, -9/10), (1/2, 3/5), (1/2, 0), (1/2, -3/5)};
            
            drawcycle(new int[] {1, 4}, new int[] {1, 1}, locations[0], 1/5);
            drawcycle(new int[] {2, 14, 6}, new int[] {2, 2, 2}, locations[1], 1/5);
            drawcycle(new int[] {3}, new int[] {2}, locations[2], 1/5);
            drawcycle(new int[] {5}, new int[] {5}, locations[3], 1/5);
            drawcycle(new int[] {7}, new int[] {0}, locations[4], 1/5);
            drawcycle(new int[] {8, 10}, new int[] {0, 0}, locations[5], 1/5);
            drawcycle(new int[] {9, 16}, new int[] {0, 0}, locations[6], 1/5);
            drawcycle(new int[] {11}, new int[] {5}, locations[7], 1/5);
            drawcycle(new int[] {12, 15}, new int[] {4, 4}, locations[8], 1/5);
            drawcycle(new int[] {13}, new int[] {3}, locations[9], 1/5);
            label("$\sigma\tau$", (0, -5/4));
        \end{asy}
    \end{minipage}
    \caption{A 6-colored 16-cycle $\sigma$ and the colored cycles of $\sigma \tau$.}
    \label{figure:coloredcycle}
\end{figure}

The $r$-colored $n$-cycle idea can be generalized to a \emph{$t$-colored $k$-subset of an $r$-colored $n$-cycle}, which consists of an $r$-colored $n$-cycle and a $k$-element subset of $\{1, 2, \dots, n\}$ such that the subset has exactly $t$ distinct colors. An example of a 6-colored 16-cycle with a 3-colored 5-subset is depicted in \cref{figure:coloredcyclesubset}.

\begin{figure}[ht]
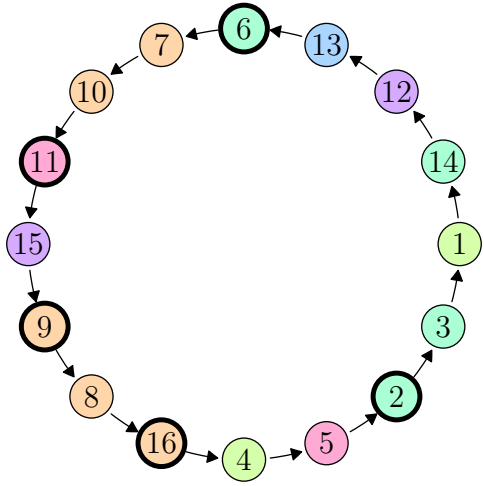

    \centering
    \begin{asy}[width=0.5\textwidth]
        int p[] = {1, 14, 12, 13, 6, 7, 10, 11, 15, 9, 8, 16, 4, 5, 2, 3};
        int colors[] = {1, 2, 4, 3, 2, 0, 0, 5, 4, 0, 0, 0, 1, 5, 2, 2};
        drawcycle(p, colors, special = new int[] {2, 6, 9, 11, 16});
    \end{asy}
    \caption{A 3-colored 5-subset of a 6-colored 16-cycle.}
    \label{figure:coloredcyclesubset}
\end{figure}

Counting $t$-colored $k$-subsets of $r$-colored $n$-cycles enables the computation of coalescence probabilities:

\begin{lemma}\label{lemma:coalescence}
    Let $k$ and $n$ be positive integers such that $k \leq n$. The probability that $1, 2, \dots, k$ are all contained in the same cycle of a product of two random $n$-cycles is
    \[\frac{(-1)^n}{(n-1)! \binom{n}{k}}\sum_{t=1}^k \frac{1}{t} \sum_{r=1}^n (-1)^r \#\left\{\raisebox{-.25\height}{\text{\shortstack{$t$-colored $k$-subsets\\of $r$-colored $n$-cycles}}}\right\}.\]
\end{lemma}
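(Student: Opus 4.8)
The plan is to reduce the stated probability to a sign‑weighted count of colored subsets by a symmetrization followed by a sign‑reversing cancellation, the latter being the crux. For the symmetrization: the event that $1,\dots,k$ lie in a single cycle is not invariant under relabeling, but the \emph{number} of $k$-subsets of $\{1,\dots,n\}$ contained in a single cycle is. Let $\pi$ be a product of two independent uniformly random $n$-cycles. For any permutation $g$, $g\pi g^{-1}$ is again such a product, so $\Pr[S\subseteq\text{one cycle of }\pi]$ is the same for every $k$-subset $S$; averaging over $S$ gives
\[\Pr[\,1,\dots,k\subseteq\text{one cycle of }\pi\,]=\frac{1}{\binom{n}{k}}\,\mathbb{E}\bigl[\#\{k\text{-subsets contained in a single cycle of }\pi\}\bigr].\]
The right‑hand side depends only on the cycle type of $\pi$, which is distributed as the cycle type of $\sigma\tau$ with $\sigma$ a uniform $n$-cycle and $\tau=(n\cdots21)$; hence the probability equals $\frac{1}{(n-1)!\binom{n}{k}}\sum_{\sigma}\#\{k\text{-subsets contained in a single cycle of }\sigma\tau\}$, the sum over all $n$-cycles $\sigma$.

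Next, fix an $n$-cycle $\sigma$, write $c=c(\sigma\tau)$, and for a $k$-subset $S$ let $j=j(\sigma,S)$ be the number of cycles of $\sigma\tau$ that meet $S$, so that $S$ lies in a single cycle exactly when $j=1$. A coloring making $(\sigma,\chi,S)$ a $t$-colored $k$-subset of an $r$-colored $n$-cycle is the same datum as a surjection $\chi$ from the set of $c$ cycles of $\sigma\tau$ onto $\{1,\dots,r\}$ (for some $r\ge1$) whose image on the $j$ cycles meeting $S$ has exactly $t$ elements. The claim is that, for fixed $\sigma$ and $S$,
\[\sum_{\chi}\frac{(-1)^{r}}{t}=(-1)^{c}\,[\,j=1\,],\]
where the sum is over all such $\chi$ and $r=|\operatorname{im}\chi|$, $t$ are read off from $\chi$. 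This follows from a generating‑function computation, or from a short inclusion–exclusion using $\sum_{r}(-1)^{r}r!\,S(c,r)=(-1)^{c}$ together with $\sum_{m\ge1}(-1)^{m-1}(m-1)!\,S(j,m)=[\,j=1\,]$, the latter being the coefficient of $\tfrac{x^{j}}{j!}$ in $\log e^{x}=x$. Summing this identity over all $k$-subsets $S$ turns the left‑hand side into the per‑$\sigma$ count of colored subsets weighted by $(-1)^{r}/t$ and the right‑hand side into $(-1)^{c}\#\{k\text{-subsets contained in a single cycle of }\sigma\tau\}$.

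Finally, since $\sigma\tau$ is an even permutation of $n$ points we have $c\equiv n\pmod 2$, so $(-1)^{c}=(-1)^{n}$; summing the previous step over all $n$-cycles $\sigma$ and regrouping colorings by their number of colors $r$ and their special‑color count $t$ yields $\sum_{\sigma}\#\{k\text{-subsets in a single cycle of }\sigma\tau\}=(-1)^{n}\sum_{t=1}^{k}\frac{1}{t}\sum_{r=1}^{n}(-1)^{r}\#\{t\text{-colored }k\text{-subsets of }r\text{-colored }n\text{-cycles}\}$, which combined with the first paragraph gives the lemma. The one nontrivial point is the displayed identity of the second paragraph — that weighting colorings by $(-1)^{r}/t$ makes every subset meeting two or more cycles contribute nothing while a subset inside one cycle contributes $(-1)^{c}$. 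The factor $\tfrac1t$ in the statement is exactly what produces this cancellation, and I expect establishing it (via the generating‑function computation above, or via a sign‑reversing involution that merges or splits one color class) to be the technical heart of the proof; the surrounding steps are bookkeeping.
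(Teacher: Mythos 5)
Your proposal is correct and follows essentially the same route as the paper: your symmetrization over $k$-subsets is the paper's final division by $(n-1)!\binom{n}{k}$, and your displayed per-$(\sigma,S)$ identity $\sum_{\chi}(-1)^{r}/t=(-1)^{c}\mathds{1}_{j=1}$ is exactly what the paper establishes by counting $q$-colorings of the cycles of $\sigma\tau$ in two ways, applying $\sum_{t}(-1)^{t-1}(t-1)!\,S(j,t)=\mathds{1}_{j=1}$, and substituting $q=-1$ (where $\binom{q}{r}=(-1)^{r}$ and $(-1)^{t-1}(t-1)!/\prod_{i=0}^{t-1}(q-i)=-1/t$). The identity you defer as the technical heart is true, and your sketched generating-function derivation is precisely the paper's argument, so there is no substantive gap.
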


To prove this lemma, one first needs the following two claims:

\begin{claim}\label{claim:partitionsum}
    Let $b$ be a positive integer, and let $S(b, t)$ denote the number of equivalence relations on $\{1, 2, \dots, b\}$ with $t$ equivalence classes. Then
    \[\sum_{t=1}^b (-1)^{t-1}(t-1)! S(b, t) = \begin{cases}
        1 & b=1 \\
        0 & \text{otherwise}.
    \end{cases}\]
\end{claim}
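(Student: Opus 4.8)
The plan is to compute the exponential generating function of the left-hand side in closed form, using the standard identity $\sum_{b \ge t} S(b,t)\,x^b/b! = (e^x-1)^t/t!$ (the EGF for set partitions into exactly $t$ nonempty blocks). Multiplying the claimed sum by $x^b/b!$, summing over $b \ge 1$, and swapping the order of summation gives
\[
\sum_{b \ge 1}\Bigl(\sum_{t=1}^b (-1)^{t-1}(t-1)!\,S(b,t)\Bigr)\frac{x^b}{b!}
= \sum_{t \ge 1} (-1)^{t-1}(t-1)!\,\frac{(e^x-1)^t}{t!}
= \sum_{t \ge 1}\frac{(-1)^{t-1}}{t}(e^x-1)^t .
\]
The last series is the Mercator expansion of $\log\bigl(1+(e^x-1)\bigr) = \log(e^x) = x$. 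Reading off the coefficient of $x^b/b!$ then gives $1$ when $b=1$ and $0$ for all $b \ge 2$, which is exactly the claim.

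Because $e^x-1$ has no constant term, everything above is legitimate at the level of formal power series: for each fixed $b$ only finitely many values of $t$ contribute to the coefficient of $x^b$, and the substitution $u \mapsto e^x-1$ into $\log(1+u)$ is a well-defined composition of formal power series. So I expect essentially no obstacle here; the only point requiring a moment's care is this formal-validity check (equivalently, one may simply restrict to real $x$ small enough that $|e^x-1|<1$, so that all the series converge absolutely).

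If one prefers an argument in the combinatorial spirit of the rest of the paper, the same identity admits a sign-reversing involution. Since $(t-1)!\,S(b,t)$ counts the ways to partition $[b]$ into $t$ nonempty blocks and arrange those blocks around a circle — call such a structure a \emph{cyclically ordered set partition} — it suffices to define a sign-reversing, block-count-changing involution on all cyclically ordered set partitions of $[b]$. Letting $B$ be the block containing the element $b$: if $|B| \ge 2$, split off the singleton $\{b\}$ and insert it into the cycle immediately before $B \setminus \{b\}$; if $B = \{b\}$ and there is more than one block, merge $\{b\}$ into the block that follows it cyclically. This changes the number of blocks by exactly one, hence flips the sign $(-1)^{t-1}$, and its unique fixed point is the single cyclically ordered set partition of $[1]$, which exists precisely when $b=1$. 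Here the main thing to check is that the map is well-defined and involutive in the boundary cases — a single block, or the ``following block'' wrapping around the cycle — which is a short but slightly fiddly verification.
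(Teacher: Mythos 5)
Your proposal is correct, and both of your arguments differ from the paper's. The paper proves the claim in two lines from the falling-factorial identity $\sum_{t=1}^b S(b,t)\,x(x-1)\cdots(x-(t-1)) = x^b$ (obtained by counting colorings of $\{1,\dots,b\}$ with $x$ colors by the number of colors actually used), then divides by $x$ and substitutes $x=0$, so the alternating sum $\sum_t (-1)^{t-1}(t-1)!\,S(b,t)$ appears directly as the value $0^{b-1}$. Your first argument instead encodes the sum in the exponential generating function via $\sum_b S(b,t)x^b/b! = (e^x-1)^t/t!$ and recognizes $\sum_{t\ge 1}\frac{(-1)^{t-1}}{t}(e^x-1)^t = \log(e^x) = x$; this is a clean, standard route, and your remark about formal validity (no constant term in $e^x-1$, so the composition with $\log(1+u)$ is legitimate) is exactly the right point to flag. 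Your second argument, the sign-reversing involution on cyclically ordered set partitions (split off or absorb the singleton $\{b\}$ next to its block), is also correct — the boundary cases you mention (one block total, or the following block wrapping around) do work out, and the unique fixed point occurs only at $b=1$ — and it is arguably the most in keeping with the combinatorial spirit of the paper. The trade-off: the paper's evaluation argument is the shortest and entirely finite; your EGF proof is equally quick once the standard identity is quoted but needs the formal-series caveat; your involution is longer to write carefully but gives a bijective explanation of the cancellation.
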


\begin{proof}
    The identity
    \[\sum_{t=1}^b S(b, t) x(x-1)(x-2) \cdots (x-(t-1)) = x^b,\]
    follows by counting colorings of $\{1, 2, \dots, b\}$ using $x$ colors by number of distinct colors used. Dividing both sides by $x$ and substituting $x=0$ gives the result.
\end{proof}

\begin{claim}\label{claim:cycleparity}
    Let $\sigma$ and $\tau$ be $n$-cycles. Then
    $c(\sigma \tau) \equiv n \bmod{2}$, where $c(\sigma \tau)$ denotes the cycle count of $\sigma \tau$.
\end{claim}

\begin{proof}
    If $\sigma \tau$ has cycle lengths $\ell_1, \dots, \ell_{c(\sigma \tau)}$, then
    \[1 = \operatorname{sgn}(\sigma \tau) = (-1)^{\ell_1-1} \cdots (-1)^{\ell_{c(\sigma \tau)}-1} = (-1)^{\ell_1 + \cdots + \ell_{c(\sigma \tau)} - c(\sigma \tau)} = (-1)^{n - c(\sigma \tau)}.\]
    Therefore $c(\sigma \tau) \equiv n \bmod{2}$.
\end{proof}

Now, \cref{lemma:coalescence} can be proved:

\begin{proof}[Proof of \cref{lemma:coalescence}]
    Define a \emph{$b$-cycle $k$-subset} of a permutation $\pi \in S_n$ to be a $k$-element subset of $\{1, 2, \dots, n\}$ contained in exactly $b$ distinct cycles of $\pi$. Then for any integer $t$ between 1 and $k$,
    \[\sum_{\substack{\sigma \in S_n \\ c(\sigma)=1}} \sum_{b=t}^k \#\left\{\raisebox{-.25\height}{\text{\shortstack{$b$-cycle $k$-\\subsets of $\sigma \tau$}}}\right\} \#\left\{\raisebox{-.25\height}{\text{\shortstack{equivalence relations on \\ $\{1, 2, \dots, b\}$ with $t$ classes}}}\right\} q^{c(\sigma\tau)-b} \prod_{i=0}^{t-1} (q - i)\]
    counts the number of ways to 
    \begin{itemize}
        \item pick an $n$-cycle $\sigma$ and an integer $b$ between $t$ and $k$, inclusive, 
        \item select a $k$-element subset of $\{1, 2, \dots, n\}$ contained in exactly $b$ distinct cycles of $\sigma \tau$,
        \item partition the $b$ cycles into exactly $t$ equivalence classes,
        \item color the $c(\sigma\tau)-b$ other cycles using $q$ colors, and
        \item color the $b$ cycles using $q$ colors such that cycles are in the same color if and only if they are in the same equivalence classes.
    \end{itemize}
    This is equal to the number of ways to
    \begin{itemize}
        \item pick an $n$-cycle $\sigma$ and an integer $r$ between $1$ and $n$, inclusive,
        \item color the cycles of $\sigma \tau$ using exactly $r$ colors out of $q$ total colors, and
        \item pick a $k$-element subset of $\{1, 2, \dots, n\}$ colored with exactly $t$ colors,
    \end{itemize}
    which is
    \[\sum_{r=1}^n \binom{q}{r} \#\left\{\raisebox{-.25\height}{\text{\shortstack{$t$-colored $k$-subsets\\of $r$-colored $n$-cycles}}}\right\}.\]
    Rearranging this equality gives
    \[\frac{\displaystyle\sum_{r=1}^n \binom{q}{r} \#\left\{\raisebox{-.25\height}{\text{\shortstack{$t$-colored $k$-subsets\\of $r$-colored $n$-cycles}}}\right\}}{q(q-1)(q-2) \cdots (q-(t-1))} = \sum_{\substack{\sigma \in S_n \\ c(\sigma)=1}} \sum_{b=t}^k \#\left\{\raisebox{-.25\height}{\text{\shortstack{$b$-cycle $k$-\\subsets of $\sigma \tau$}}}\right\} S(b, t) q^{c(\sigma\tau)-b},\]
    where $S(b, t)$ denotes the number of equivalence relations on a $\{1, 2, \dots, b\}$ with $t$ classes. Summing over all $t$ from 1 to $k$, weighted by $(-1)^{t-1} (t-1)!q^{1-n}$, gives
    \begin{align*}
        &\phantom{{}={}}\sum_{t=1}^k \frac{(-1)^{t-1} (t-1)!q^{1-n}}{q(q-1)(q-2) \cdots (q-(t-1))} \sum_{r=1}^n \binom{q}{r} \#\left\{\raisebox{-.25\height}{\text{\shortstack{$t$-colored $k$-subsets\\of $r$-colored $n$-cycles}}}\right\} \\
        &= \sum_{t=1}^k (-1)^{t-1} (t-1)! q^{1-n} \sum_{\substack{\sigma \in S_n \\ c(\sigma)=1}} \sum_{b=t}^k \#\left\{\raisebox{-.25\height}{\text{\shortstack{$b$-cycle $k$-\\subsets of $\sigma \tau$}}}\right\} S(b, t) q^{c(\sigma\tau)-b} \\
        &= \sum_{\substack{\sigma \in S_n \\ c(\sigma)=1}} \sum_{b=1}^k \#\left\{\raisebox{-.25\height}{\text{\shortstack{$b$-cycle $k$-\\subsets of $\sigma \tau$}}}\right\} q^{c(\sigma\tau)-b+1-n} \sum_{t=1}^b (-1)^{t-1} (t-1)! S(b, t) \\
        &\overset{\mathclap{(\ref{claim:partitionsum})}}{=} \sum_{\substack{\sigma \in S_n \\ c(\sigma)=1}} \sum_{b=1}^k \#\left\{\raisebox{-.25\height}{\text{\shortstack{$b$-cycle $k$-\\subsets of $\sigma \tau$}}}\right\} q^{c(\sigma\tau)-b+1-n} \mathds{1}_{b=1} \\
        &= \sum_{\substack{\sigma \in S_n \\ c(\sigma)=1}} \#\left\{\raisebox{-.25\height}{\text{\shortstack{$1$-cycle $k$-\\subsets of $\sigma \tau$}}}\right\}q^{c(\sigma\tau)-n}.
    \end{align*}
    Finally, $c(\sigma \tau)$ always has the same parity as $n$ by \cref{claim:cycleparity}. Thus substituting $q=-1$ above gives
    \begin{align*}
        (-1)^n\sum_{t=1}^k \frac{1}{t} \sum_{r=1}^n (-1)^r \#\left\{\raisebox{-.25\height}{\text{\shortstack{$t$-colored $k$-subsets\\of $r$-colored $n$-cycles}}}\right\} = \#\left\{\raisebox{-.25\height}{\text{\shortstack{$n$-cycles $\sigma$ and $1$-cycle \\ $k$-subsets of $\sigma \tau$}}}\right\}.
    \end{align*}
    Since there are $(n-1)!\binom{n}{k}$ ways to choose an $n$-cycle and a $k$-element subset of $\{1, 2, \dots, n\}$, dividing both sides by $(n-1)! \binom{n}{k}$ gives the desired probability.
\end{proof}

\cref{lemma:coalescence} provides the method of attack for calculating coalescence probabilities. It remains to count $t$-colored $k$-subsets of $r$-colored $n$-cycles and evaluate the resulting sum to obtain the main result.

\section{The bijection}\label{section:bijection}

Given a positive integer $n$ and a sequence $s_1, \dots, s_r$ of positive integers summing to $n$, define an \emph{$(s_1, \dots, s_r)$-colored $n$-cycle} to be an $r$-colored $n$-cycle with $s_i$ colors of color $i$ for all $1 \leq i \leq r$. For example, the 6-colored 16-cycle in \cref{figure:coloredcycle} is a $(5, 2, 4, 1, 2, 2)$-colored 16-cycle, where the colors are ordered orange, lime green, sea green, blue, purple, and pink. This section will culminate in a formula for the number of $t$-colored $k$-subsets of $r$-colored $n$-cycles by
\begin{itemize}
    \item describing a bijection involving $(s_1, \dots, s_r)$-colored $n$-cycles (\cref{theorem:bijection}), which takes three subparts (\cref{lemma:bijectionstep1}, \cref{lemma:bijectionstep2}, and \cref{lemma:bijectionstep3}) to prove,
    \item using the bijection to count $r$-colored $n$-cycles (\cref{corollary:coloredcycle}), and
    \item extending the bijection to count $t$-colored $k$-subsets of $r$-colored $n$-cycles (\cref{corollary:coloredcyclesubset}).
\end{itemize}

\begin{theorem}\label{theorem:bijection}
    Let $n$ be a positive integer, let $s_1, \dots, s_r$ be a sequence of positive integers summing to $n$, and let $S$ be the $n$-element set
    \[S = \{(1, 1), (1, 2), \dots, (1, s_1), (2, 1), (2, 2), \dots, (2, s_2), \dots, (r, 1), (r, 2), \dots, (r, s_r)\}.\]
    There is a bijection between $(s_1, \dots, s_r)$-colored $n$-cycles and ways to arrange the elements of $S$ to form an $(r-1)$-term sequence and an $(n-r+1)$-element cycle.
\end{theorem}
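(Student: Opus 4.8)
The plan is to route the bijection through Eulerian circuits of a multigraph on the set of colors, as the introduction foreshadows. The starting observation is that in an $(s_1,\dots,s_r)$-colored $n$-cycle with coloring $c$, the requirement ``every cycle of $\sigma\tau$ is monochromatic'' says exactly $c\circ\sigma\tau=c$, equivalently $c\circ\sigma=c\circ\alpha$ where $\alpha=\tau^{-1}=(1\,2\,\cdots\,n)$ is the shift $x\mapsto x+1$ (indices mod $n$). Given this, build the multigraph $G$ on vertex set $\{1,\dots,r\}$ with one directed edge $e_y$ for each $y\in\{1,\dots,n\}$, pointing from color $c(y)$ to color $c(y+1)$. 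Reading $e_1,e_2,\dots,e_n$ in order is a closed walk; so is reading $e_1,e_{\sigma(1)},e_{\sigma^2(1)},\dots,e_{\sigma^{n-1}(1)}$, because the head of $e_{\sigma^i(1)}$ is $c(\sigma^i(1)+1)=c(\sigma^{i+1}(1))$, the tail of the next edge. Since $\sigma$ is an $n$-cycle and every color appears, $G$ is connected with exactly $r$ vertices, color $i$ having out-degree and in-degree $s_i$, and the two walks are Eulerian circuits of $G$ sharing their first edge $e_1$ (hence their first vertex $\rho:=c(1)$). I expect the first sub-lemma to be precisely this: colored $n$-cycles correspond to connected Eulerian multigraphs on $\{1,\dots,r\}$ with these degrees, carrying an ordered pair (reference circuit, $\sigma$-circuit) of Eulerian circuits sharing a first edge.

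Next I would serialize this data. Using the reference circuit to name edges — the $j$-th time it leaves color $i$, call that edge $(i,j)$ — identifies the $n$ edges with $S$ and makes the reference circuit ``greedy'' (at its $j$-th visit to $i$ it leaves by $(i,j)$), so $G$ and $\rho$ are recoverable from the $S$-named graph alone; thus ``$G$ with its reference circuit'' is just a balanced, connected choice of a head in $\{1,\dots,r\}$ for each element of $S$. It remains to turn the $\sigma$-circuit — an Eulerian circuit of this $S$-named graph with first edge $(\rho,1)$ — into an ordered $(r-1)$-subset of $S$ plus a cyclic order of the remaining $n-r+1$ elements. The natural tool is the classical correspondence (the content of the BEST theorem) between Eulerian circuits from a root and pairs: a spanning in-arborescence toward the root, plus a linear order of the out-edges at each vertex with the arborescence edge last. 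The arborescence contributes one edge out of each of the $r-1$ non-root vertices; serialized in the order the reference circuit uses them, these form the $(r-1)$-term sequence, and the other $n-r+1$ edges, serialized cyclically so as to record the out-edge orders, form the $(n-r+1)$-element cycle. That should be the second sub-lemma, with the third running the composite backward: recover $\rho$ as the vertex missing from the first coordinates of the sequence, rebuild the arborescence and the out-edge orders, run the reconstruction to get the $\sigma$-circuit as a walk, read the heads off it to recover $G$, recover the reference circuit greedily, and read off $(c,\sigma)$ — then check that $\sigma$ is a single $n$-cycle and $c$ uses color $i$ exactly $s_i$ times.

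The hard part is that the obvious serializations lose information: merely listing the non-arborescence edges in the cyclic order in which the $\sigma$-circuit visits them forgets the heads, and one can check already for $n=4$, $r=2$ (e.g.\ the colorings $1212$ and $1221$) that this identifies distinct colored cycles. So the real content of the second and third sub-lemmas is to engineer the two serializations so that, between them, they record exactly the $n$ edge-heads together with the $\prod_i(s_i-1)!$ worth of out-edge orderings — the matching count $n!/(n-r+1)=\binom{n}{r-1}(r-1)!\,(n-r)!$ on the target side is a sanity check, not a substitute for exhibiting the maps. A related subtlety is carefully tracking the two base points (the shared first vertex $\rho$ and the shared first edge $(\rho,1)$) so that the asymmetry between an ordered $(r-1)$-subset and a cyclically ordered $(n-r+1)$-subset lines up with the asymmetry between the in-arborescence — one out-edge per non-root vertex, hence linearly serializable — and its complement, which carries a distinguished starting edge and so is cyclically serializable only after quotienting by it. Once the two serializations are arranged to be mutually inverse, the theorem follows by composing the three sub-lemmas.
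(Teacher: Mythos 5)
Your first step is sound and is essentially the paper's \cref{lemma:bijectionstep1}: the digraph with one edge $e_y$ from $c(y)$ to $c(y+1)$, the two Eulerian circuits $e_1,e_2,\dots,e_n$ and $e_1,e_{\sigma(1)},e_{\sigma^2(1)},\dots$ sharing their first edge, and the degree and connectivity checks all match. The gap is in everything after that. You invoke the BEST correspondence and then say the arborescence edges should be ``serialized in the order the reference circuit uses them'' and the complementary $n-r+1$ edges ``serialized cyclically so as to record the out-edge orders,'' but you never define either serialization, and you yourself concede that the obvious ones lose information (your $1212$ versus $1221$ example) and that the real content is to ``engineer'' maps recording the edge-heads together with the $\prod_i(s_i-1)!$ orderings. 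That engineering is exactly the substance of \cref{lemma:bijectionstep2} and \cref{lemma:bijectionstep3}; a count matching $n!/(n-r+1)$ plus an unspecified plan does not establish a bijection. There is also a concrete false claim in your setup: ``$G$ with its reference circuit'' is \emph{not} the same data as a balanced, connected choice of a head for each element of $S$. The greedy walk that leaves vertex $i$ along $(i,1),(i,2),\dots$ is Eulerian only when the last exits $(i,s_i)$ of the vertices other than the root form a spanning arborescence toward the root (BEST again); for instance, with $r=2$, $s_1=s_2=2$ and heads $(1,1)\mapsto 2$, $(1,2)\mapsto 1$, $(2,1)\mapsto 1$, $(2,2)\mapsto 2$, the greedy walk from vertex $1$ closes after three edges. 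Moreover the root is genuinely extra data: with $s_1=s_2=1$ and edges $1\to 2$, $2\to 1$, both vertices admit greedy Eulerian circuits, and the two choices correspond to the two distinct colored $2$-cycles, so your identification conflates them.

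For comparison, the paper closes precisely the hole you flag by distributing the data differently between the two tours: one tour is remembered only through the \emph{names} it gives the edges (its $j$-th exit from vertex $i$ is the element $(i,j)$ of $S$), so the out-edge orderings are absorbed into the labeling instead of needing a separate record; the other tour is remembered as the wiring (in-to-out pairing) at each vertex, equivalently as a single $n$-cycle on the edge labels. The $r-1$ last-exit (tree) labels are deleted from that cycle, yielding the $(n-r+1)$-element cycle, and the tree itself, with its labels, is encoded by a Pr\"ufer-style sequence; invertibility then rests on inverse Pr\"ufer reconstruction of the tree and re-insertion of its labels into the wiring cycle. Your outline is compatible with this, but until you pin down such a pair of serializations and verify the two-sided inverse, what you have is a plan rather than a proof.
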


\begin{figure}[ht]
    \centering
    \begin{minipage}{0.45\textwidth}
        \centering
        \begin{asy}[width=\textwidth]
            int p[] = {1, 14, 12, 13, 6, 7, 10, 11, 15, 9, 8, 16, 4, 5, 2, 3};
            int colors[] = {1, 2, 4, 3, 2, 0, 0, 5, 4, 0, 0, 0, 1, 5, 2, 2};
            drawcycle(p, colors, labelsize=4/5);

            rectangle((1.2, 1.2), (-1.2, -1.2));
        \end{asy}
    \end{minipage}
    \begin{minipage}{0.05\textwidth}
        \[\Longleftrightarrow\]
    \end{minipage}
    \begin{minipage}{0.45\textwidth}
        \centering
        \begin{asy}[width=\textwidth]
            numbercircle(2, 1, (-0.6, 0));
            numbercircle(0, 5, (-0.3, 0));
            numbercircle(1, 2, (0, 0));
            numbercircle(2, 2, (0.3, 0));
            numbercircle(1, 1, (0.6, 0));

            drawcycle(new int[] {3, 4, 2, 1, 4, 3, 2, 1, 1, 1, 2}, new int[] {2, 2, 5, 0, 0, 0, 0, 5, 4, 3, 4}, c =(0, -7/6), s=2/3, card=true, labelsize=2/3, colorboundary=true);

            label("$+$", (0, -0.25));

            rectangle((-1.1, -2), (1.1, 0.2));
        \end{asy}
    \end{minipage}
    \caption{The bijection in \cref{theorem:bijection}.}
    \label{figure:bijection}
\end{figure}

An example of this bijection is given in \cref{figure:bijection}. The elements of $S$ are depicted as colored circles containing ordinal numerals; the first coordinate determines the color and the second coordinate determines the ordinal numeral. 

The proof of \cref{theorem:bijection} has three subparts. To prove it, first define a \emph{degree-$(s_1, \dots, s_r)$ digraph}, where $s_1, \dots, s_r$ are positive integers summing to $n$, to be a directed graph with vertices $\{1, 2, \dots, r\}$ and $n$ directed edges such that vertex $i$ has indegree and outdegree $s_i$ for each $1 \leq i \leq r$. An example of a degree-$(5, 2, 4, 1, 2, 2)$ digraph is given in \cref{figure:digraph}.

\begin{figure}[h]
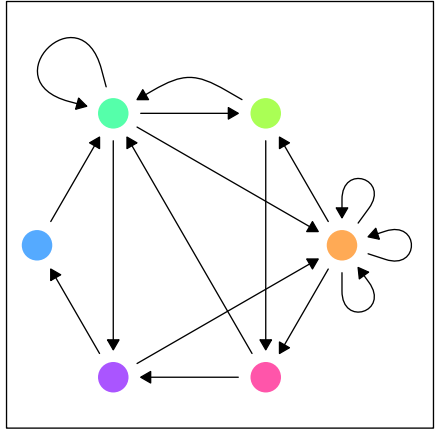

    \centering
    \begin{asy}[width=0.45\textwidth]
        drawvertices();

        drawedge(1, 2, dir(150), dir(30));
        drawedge(2, 4);
        drawedge(4, 3);
        drawedge(3, 2);
        drawedge(2, 0);
        drawedge(0, 0, dir(-90), dir(-54));
        drawedge(0, 5);
        drawedge(5, 4);
        drawedge(4, 0);
        drawedge(0, 0, dir(-18), dir(18));
        drawedge(0, 0, dir(54), dir(90));
        drawedge(0, 1);
        drawedge(1, 5);
        drawedge(5, 2);
        drawedge(2, 2, dir(105), dir(165));
        drawedge(2, 1);

        rectangle((1.6, 1.6), (-1.2, -1.2));
    \end{asy}
    \caption{A degree-$(5, 2, 4, 1, 2, 2)$ digraph.}
    \label{figure:digraph}
\end{figure}

The first step of the bijection, depicted in \cref{figure:bijectionstep1}, is:

\begin{lemma}\label{lemma:bijectionstep1}
    Let $s_1, \dots, s_r$ be a sequence of positive integers summing to $n$. There is a bijection between $(s_1, \dots, s_r)$-colored $n$-cycles and ordered pairs of Eulerian tours with the same starting edge on degree-$(s_1, \dots, s_r)$ digraphs.
\end{lemma}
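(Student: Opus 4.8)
The plan is to make the correspondence fully explicit in both directions and then verify that the two maps are mutually inverse; the single substantive point is the translation of the hypothesis ``every cycle of $\sigma\tau$ is monochromatic'' into a local gluing condition on the edges of the digraph. To that end I would first reformulate that hypothesis: recall $\tau=(n\,\dots\,2\,1)$, so $\tau(k)=k-1$ (indices mod $n$) and hence $(\sigma\tau)(k)=\sigma(k-1)$. The hypothesis says exactly that the coloring $c\colon\{1,\dots,n\}\to\{1,\dots,r\}$ is constant on the orbits of $\sigma\tau$, i.e. $c(\sigma(k-1))=c(k)$ for all $k$. Reindexing by $j=k-1$, this is the single local identity
\[
c(\sigma(j))=c(j+1)\qquad\text{for all } j\in\{1,\dots,n\}\ (\text{indices mod }n),
\]
which is what will drive the bijection.

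For the forward map, from an $(s_1,\dots,s_r)$-colored $n$-cycle $(\sigma,c)$ I build the digraph $G$ on vertex set $\{1,\dots,r\}$ whose $n$ edges I label by $\{1,\dots,n\}$, with edge $j$ directed from $c(j)$ to $c(j+1)$. Then vertex $i$ has out-degree $\#\{j:c(j)=i\}=s_i$ and, reindexing $j\mapsto j+1$, in-degree $\#\{j:c(j+1)=i\}=s_i$, so $G$ is a degree-$(s_1,\dots,s_r)$ digraph. On $G$ I take the two Eulerian tours, both beginning at edge $1$,
\[
T_1=(1,2,\dots,n),\qquad T_2=(1,\sigma(1),\sigma^2(1),\dots,\sigma^{n-1}(1)).
\]
That $T_1$ is a closed directed Eulerian tour is automatic and uses no hypothesis, since $\operatorname{head}(j)=c(j+1)=\operatorname{tail}(j+1)$ and $\operatorname{head}(n)=c(1)=\operatorname{tail}(1)$; in particular $G$ is connected, being covered by one closed walk. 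That $T_2$ lists each edge exactly once uses that $\sigma$ is an $n$-cycle, and that its consecutive edges $\sigma^t(1),\sigma^{t+1}(1)$ meet — i.e. $\operatorname{head}(\sigma^t(1))=c(\sigma^t(1)+1)$ equals $\operatorname{tail}(\sigma^{t+1}(1))=c(\sigma(\sigma^t(1)))$ — is precisely the local identity at $j=\sigma^t(1)$, which also closes $T_2$ when $t=n-1$. So $(\sigma,c)\mapsto(G,T_1,T_2)$ is well-defined.

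For the inverse, given a degree-$(s_1,\dots,s_r)$ digraph $G$ together with an ordered pair $(T_1,T_2)$ of Eulerian tours on $G$ with a common starting edge, I label the edges of $G$ by $\{1,\dots,n\}$ in the order $T_1$ traverses them, so $T_1=(1,\dots,n)$ and the shared starting edge is $1$. Set $c(j):=\operatorname{tail}(j)$, so $\#\{j:c(j)=i\}=\operatorname{outdeg}(i)=s_i$; and let $\sigma(j)$ be the label of the edge following edge $j$ in $T_2$ (cyclically), which is an $n$-cycle since $T_2$ is a single tour through all $n$ edges. Then $\operatorname{head}(j)=\operatorname{tail}(\sigma(j))=c(\sigma(j))$ by consecutiveness in $T_2$, while $\operatorname{head}(j)=\operatorname{tail}(j+1)=c(j+1)$ by consecutiveness in $T_1$; hence $c(\sigma(j))=c(j+1)$, the local identity, so $(\sigma,c)$ is a genuine $(s_1,\dots,s_r)$-colored $n$-cycle. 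Unwinding the definitions shows the two constructions are inverse to each other, which proves the lemma.

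The crux — the step I would write most carefully — is the equivalence between ``$\sigma\tau$ has monochromatic cycles'' and the local identity $c(\sigma(j))=c(j+1)$, together with the observation that this one identity does double duty: in the forward direction it is exactly what makes $T_2$ a legal closed Eulerian tour, and in the reverse direction it is exactly what forces the colored-cycle condition. Everything else is routine bookkeeping. The only point of care is that the distinguished element $1\in\{1,\dots,n\}$ corresponds to the common starting edge, and conversely the labeling of the $n$ edges by $\{1,\dots,n\}$ on the Eulerian-tour side is only meaningful once $T_1$ has been used to impose it — so the bijection is between colored $n$-cycles and same-start ordered pairs of Eulerian tours on degree-$(s_1,\dots,s_r)$ digraphs whose edges are indexed in that way.
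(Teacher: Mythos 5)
Your proposal is correct and is essentially the paper's own argument made explicit: your edge $j$ from $c(j)$ to $c(j+1)$, with the tours $T_1=(1,2,\dots,n)$ and $T_2=(1,\sigma(1),\sigma^2(1),\dots)$, are exactly the paper's ``uncircled'' and ``circled'' labelings, and the local identity $c(\sigma(j))=c(j+1)$ is the paper's observation that $\sigma(\tau(i))$ has the same color as $i$. The only difference is that you spell out the inverse map and the verification that the paper compresses into ``this process is reversible,'' which is a welcome but not substantively different elaboration.
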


\begin{figure}[p]
    \centering
    \begin{asy}[width=0.45\textwidth]
        int p[] = {1, 14, 12, 13, 6, 7, 10, 11, 15, 9, 8, 16, 4, 5, 2, 3};
        int colors[] = {1, 2, 4, 3, 2, 0, 0, 5, 4, 0, 0, 0, 1, 5, 2, 2};
        drawcycle(p, colors, labelsize=4/5);

        rectangle((1.2, 1.2), (-1.2, -1.2));
    \end{asy}
    \\ $\Downarrow \text{\scriptsize{(1a)}}$ \\
    \begin{asy}[width=0.45\textwidth]
        int p[] = {1, 14, 12, 13, 6, 7, 10, 11, 15, 9, 8, 16, 4, 5, 2, 3};
        int colors[] = {1, 2, 4, 3, 2, 0, 0, 5, 4, 0, 0, 0, 1, 5, 2, 2};
        
        int n = p.length;
        int r = 6;
        for (int i = 0; i < n; ++i) {
            draw(arc((0, 0), 1, i*360/n+7, (i+1)*360/n-7), ArcArrow);
            pair P = dir(i*360/n);
            fill(circle(P, 1/10), rainbow(colors[i]/r, 1/3));
            fill(circle(1.075*P*dir(0.4*360/n), 0.05), lightgray);
            draw(circle(1.075*P*dir(0.4*360/n), 0.05), gray);
            label(scale(0.5)*string(i+1), 0.95*P*dir(0.4*360/n));
            label(scale(0.5)*string(p[i]), 1.075*P*dir(0.4*360/n));
        }

        rectangle((1.2, 1.2), (-1.2, -1.2));
    \end{asy}
    \\ $\Downarrow \text{\scriptsize{(1b)}}$ \\
    \begin{asy}[width=0.45\textwidth]
        drawvertices();

        edgelabels(1, 2, 1, 1, dir(150), dir(30));
        edgelabels(2, 4, 2, 14);
        edgelabels(4, 3, 3, 12);
        edgelabels(3, 2, 4, 13);
        edgelabels(2, 0, 5, 6);
        edgelabels(0, 0, 6, 7, dir(-90), dir(-54));
        edgelabels(0, 5, 7, 10);
        edgelabels(5, 4, 8, 11);
        edgelabels(4, 0, 9, 15);
        edgelabels(0, 0, 10, 9, dir(-18), dir(18));
        edgelabels(0, 0, 11, 8, dir(54), dir(90));
        edgelabels(0, 1, 12, 16);
        edgelabels(1, 5, 13, 4);
        edgelabels(5, 2, 14, 5);
        edgelabels(2, 2, 15, 2, dir(105), dir(165));
        edgelabels(2, 1, 16, 3);

        rectangle((1.6, 1.6), (-1.2, -1.2));
    \end{asy}
    \caption{The bijection in \cref{lemma:bijectionstep1}.}
    \label{figure:bijectionstep1}
\end{figure}

\begin{proof}
    Consider an $(s_1, \dots, s_r)$-colored $n$-cycle with underlying cycle $\sigma$. As shown in step (1a), label the ``outside'' of each edge with the number it originates from, then label the ``inside'' of each edge in numerical order starting from the edge with outside label ``1''. As \cref{figure:bijectionstep1} suggests, the outside numbers will be referred to as ``circled numbers'', and the inside numbers will be referred to as ``uncircled numbers''. Treat the resulting object as a cycle digraph on $n$ colored vertices with doubly labeled edges.

    To construct the $(s_1, \dots, s_r)$-degree digraph from this colored cycle digraph with doubly labeled edges, identify vertices of the same color, as shown in step (1b). This gives a digraph with $r$ vertices such that vertex $i$ has degree $s_i$ for each $1 \leq i \leq r$. The uncircled numbers form an Eulerian tour by construction, and the circled numbers also form an Eulerian tour because the definition of an $r$-colored $n$-cycle forces $\sigma(\tau(i))$ to have the same color as $i$ for each $1 \leq i \leq n$.
    
    Since this process is reversible, this proves the bijection.
\end{proof}

\newpage

To state the second subpart of \cref{theorem:bijection}'s proof, define, given a directed graph such that the indegree and outdegree of each vertex are equal,
\begin{itemize}
    \item a \emph{wiring} of a vertex $v$ to be a pairing of the ingoing and outgoing edges incident to $v$,
    \item an \emph{exit ordering} of a vertex to be a total ordering of the outgoing edges incident to the vertex, and
    \item the \emph{last exits} of a set of vertices $V$, given an exit ordering for each vertex, to be the set of directed edges that appear last in the exit ordering of some vertex in $V$.
\end{itemize}
A wiring and an exit ordering are depicted in \cref{figure:wiring}. 

\begin{figure}[h]
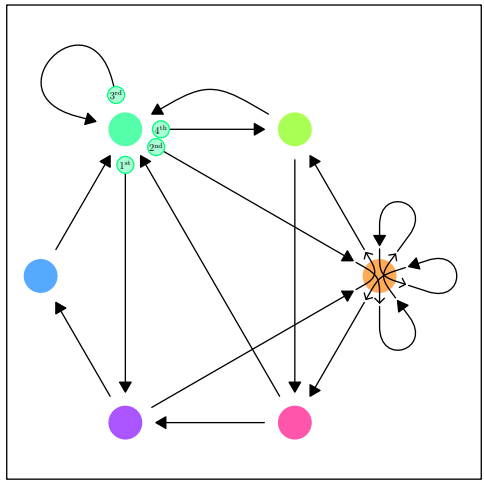

    \centering
    \begin{asy}[width=0.5\textwidth]
        drawvertices();

        drawedge(1, 2, dir(150), dir(30));
        drawedge(2, 4);
        drawedge(4, 3);
        drawedge(3, 2);
        drawedge(2, 0);
        drawedge(0, 0, dir(-90), dir(-54));
        drawedge(0, 5);
        drawedge(5, 4);
        drawedge(4, 0);
        drawedge(0, 0, dir(-18), dir(18));
        drawedge(0, 0, dir(54), dir(90));
        drawedge(0, 1);
        drawedge(1, 5);
        drawedge(5, 2);
        drawedge(2, 2, dir(105), dir(165));
        drawedge(2, 1);

        connector(0, dir(210), dir(120));
        connector(0, dir(150), dir(270));
        connector(0, dir(-54), dir(54));
        connector(0, dir(90), dir(-18));
        connector(0, dir(18), dir(240));

        exitcounter(2, 1, dir(-90));
        exitcounter(2, 2, dir(-30));
        exitcounter(2, 3, dir(105));
        exitcounter(2, 4, dir(0));

        rectangle((1.6, 1.6), (-1.2, -1.2));
    \end{asy}
    \caption{A wiring of the first (orange) vertex and an exit ordering of the third (sea green) vertex in the graph from \cref{figure:digraph}. The last exit of the third vertex is the edge pointing towards the second (lime green) vertex.}
    \label{figure:wiring}
\end{figure}

The second subpart of \cref{theorem:bijection}'s proof, depicted in \cref{figure:bijectionstep2}, is inspired by the BEST theorem and the ``last exit tree'':

\begin{lemma}\label{lemma:bijectionstep2}
    Let $s_1, \dots, s_r$ be a sequence of positive integers summing to $n$.
    There is a bijection between ordered pairs of Eulerian tours with the same starting edge on degree-$(s_1, \dots, s_r)$ digraphs and structures on degree-$(s_1, \dots, s_r)$ digraphs consisting of
    \begin{itemize}
        \item a choice of vertex $v \in \{1, 2, \dots, r\}$,
        \item a choice of wiring for each vertex, and
        \item a choice of exit orderings for each vertex
    \end{itemize}
    such that the wirings induce a cycle on the edges of the digraph, and the last exits of $\{1, 2, \dots, r\} \setminus \{v\}$ form a spanning tree directed towards $v$.
\end{lemma}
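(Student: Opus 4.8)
The plan is to prove this one digraph at a time. Fix a degree-$(s_1,\dots,s_r)$ digraph $G$; such a digraph is automatically balanced ($\deg^+ = \deg^-$ at every vertex), and since both a single-cycle wiring and an Eulerian tour force $G$ to be connected, both sides of the claimed bijection are empty unless $G$ is connected, so assume it is. I then want mutually inverse maps between the ordered pairs of Eulerian tours on $G$ sharing a starting edge and the triples $(v,W,O)$ of the stated form supported on $G$. The guiding idea, as the lemma's preamble advertises, is the BEST theorem and its last-exit tree: the wiring $W$ will encode the first tour, the exit orderings $O$ (through the last-exit tree they determine) will encode the second tour in the usual BEST-compressed form, and $v$ will be the common basepoint.

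Here are the two maps. Given an ordered pair $(E_1,E_2)$ of Eulerian tours with common starting edge $e_0$, I would record: the vertex $v$ equal to the tail of $e_0$; the wiring $W$ that at each vertex pairs every incoming edge with the outgoing edge immediately following it along $E_1$ --- this $W$ is a single $n$-cycle on the edge set precisely because $E_1$ is one closed tour rather than several; and, at each vertex, the exit ordering $O$ listing its outgoing edges by order of first appearance along $E_2$. Conversely, given a triple $(v,W,O)$, I would recover $e_0$ as the first edge of $O_v$, recover $E_1$ as the edge sequence $e_0, W(e_0), W^2(e_0), \dots$ (a closed tour through all $n$ edges since $W$ is an $n$-cycle on edges), and recover $E_2$ by the greedy walk that starts from $v$ along $e_0$ and, at each vertex reached, departs along the not-yet-used outgoing edge smallest in that vertex's exit ordering, halting when none remains. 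The $W$-related parts of ``well defined'' and ``mutually inverse'' are immediate; what needs work is everything about $E_2$.

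The heart of the argument --- and the main obstacle --- is thus the two BEST-theorem facts about $E_2$. First (forward direction): for any Eulerian tour $E_2$ based at $v$, the last-used outgoing edges of the vertices in $\{1,\dots,r\}\setminus\{v\}$ form a spanning tree directed toward $v$. These are $r-1$ distinct edges (distinct vertices contribute out-edges of distinct vertices), so it suffices to forbid a directed cycle among them; and if $u_1\to u_2\to\cdots\to u_m\to u_1$ were such a cycle, picking $j$ so that $u_j\to u_{j+1}$ is the cycle-edge traversed latest by $E_2$, then after $E_2$ performs this traversal --- its \emph{last} exit from $u_j$ --- it must still leave $u_{j+1}$ at least once more, since $u_{j+1}\neq v$ and $E_2$ ends at $v$; hence $u_{j+1}$'s last exit occurs later still, contradicting the choice of $j$. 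Second (backward direction): the greedy walk determined by $(v,W,O)$ never stalls before exhausting all $n$ edges. Degree-balance forces it to halt only at $v$ --- halting at any $u\neq v$ would mean entering $u$ once more than it has incoming edges --- and if when it halts some edge were still unused, then the set $U$ of vertices with an unused outgoing edge (equivalently, with an unused incoming edge, the partial walk being closed) would be nonempty and would miss $v$; but since the greedy walk uses each vertex's out-edges in the order $O$, any $u\in U$ has its last-exit edge unused, so the tree-parent of $u$ acquires an unused incoming edge and also lies in $U$, and following tree-parents toward $v$ then puts $v\in U$, a contradiction. Once these are in place, the remaining compatibilities --- notably that $O_v$'s first edge equals $e_0$ because $E_2$ starts with $e_0$, and that the greedy reconstruction of $E_2$ reproduces the tour whose first-appearance orderings defined $O$ --- are routine, and the two maps are seen to be mutually inverse, establishing the bijection.
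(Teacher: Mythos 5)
Your proof is correct and takes essentially the same route as the paper's: encode the first tour as the wiring, the second as the exit orderings, take $v$ to be the tail of the shared starting edge, and verify both directions via the BEST-theorem last-exit-tree argument. You actually supply more detail than the paper at the two points it merely asserts, namely the acyclicity of the last exits (your ``latest-traversed cycle edge'' argument) and the fact that the greedy walk cannot stall before exhausting all edges (your degree-balance and tree-parent argument).
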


\begin{figure}[!p]
    \begin{minipage}{0.45\textwidth}
        \centering
        \begin{asy}[width=\textwidth]
            drawvertices();
    
            edgelabels(1, 2, 1, 1, dir(150), dir(30));
            edgelabels(2, 4, 2, 14);
            edgelabels(4, 3, 3, 12);
            edgelabels(3, 2, 4, 13);
            edgelabels(2, 0, 5, 6);
            edgelabels(0, 0, 6, 7, dir(-90), dir(-54));
            edgelabels(0, 5, 7, 10);
            edgelabels(5, 4, 8, 11);
            edgelabels(4, 0, 9, 15);
            edgelabels(0, 0, 10, 9, dir(-18), dir(18));
            edgelabels(0, 0, 11, 8, dir(54), dir(90));
            edgelabels(0, 1, 12, 16);
            edgelabels(1, 5, 13, 4);
            edgelabels(5, 2, 14, 5);
            edgelabels(2, 2, 15, 2, dir(105), dir(165));
            edgelabels(2, 1, 16, 3);

            rectangle((1.6, 1.6), (-1.2, -1.2));

            currentpicture = shift((0, 3.4)) * currentpicture;

            drawvertices();

            drawedge(1, 2, dir(150), dir(30));
            drawedge(2, 4);
            drawedge(4, 3);
            drawedge(3, 2);
            drawedge(2, 0);
            drawedge(0, 0, dir(-90), dir(-54));
            drawedge(0, 5);
            drawedge(5, 4);
            drawedge(4, 0);
            drawedge(0, 0, dir(-18), dir(18));
            drawedge(0, 0, dir(54), dir(90));
            drawedge(0, 1);
            drawedge(1, 5);
            drawedge(5, 2);
            drawedge(2, 2, dir(105), dir(165));
            drawedge(2, 1);

            connector(2, dir(30), dir(105));
            connector(2, dir(165), dir(0));
            connector(1, dir(180), dir(270));
            connector(5, dir(90), dir(120));
            connector(2, dir(-60), dir(-30));
            connector(0, dir(150), dir(270));
            connector(0, dir(-54), dir(54));
            connector(0, dir(90), dir(-18));
            connector(0, dir(18), dir(240));
            connector(5, dir(60), dir(180));
            connector(4, dir(0), dir(120));
            connector(3, dir(-60), dir(60));
            connector(2, dir(-120), dir(-90));
            connector(4, dir(90), dir(30));
            connector(0, dir(210), dir(120));
            connector(1, dir(-60), dir(150));

            currentpicture = shift((0, 2.5)) * currentpicture;

            label("$+$", (0.2, 1.3));

            drawvertices();
            label(scale(2/3)*"$v$", vertex(1));

            drawedge(1, 2, dir(150), dir(30));
            drawedge(2, 4);
            drawedge(4, 3);
            drawedge(3, 2, isthick=true);
            drawedge(2, 0);
            drawedge(0, 0, dir(-90), dir(-54));
            drawedge(0, 5);
            drawedge(5, 4);
            drawedge(4, 0, isthick=true);
            drawedge(0, 0, dir(-18), dir(18));
            drawedge(0, 0, dir(54), dir(90));
            drawedge(0, 1, isthick=true);
            drawedge(1, 5);
            drawedge(5, 2, isthick=true);
            drawedge(2, 2, dir(105), dir(165));
            drawedge(2, 1, isthick=true);

            exitcounter(0, 1, dir(270));
            exitcounter(0, 2, dir(240));
            exitcounter(0, 3, dir(-18));
            exitcounter(0, 4, dir(54));
            exitcounter(0, 5, dir(120));

            exitcounter(1, 1, dir(150));
            exitcounter(1, 2, dir(-90));

            exitcounter(2, 1, dir(-90));
            exitcounter(2, 2, dir(-30));
            exitcounter(2, 3, dir(105));
            exitcounter(2, 4, dir(0));

            exitcounter(3, 1, dir(60));

            exitcounter(4, 1, dir(120));
            exitcounter(4, 2, dir(30));

            exitcounter(5, 1, dir(180));
            exitcounter(5, 2, dir(120));

            rectangle((1.6, 2.5+1.6), (-1.2, -1.2));
        \end{asy}
    \end{minipage} \hfill
    \begin{minipage}{0.05\textwidth}
        \[\overset{\text{(2a)}}{\Longleftrightarrow}\]
        \vspace{12\baselineskip}
        \[\overset{\text{(2b)}}{\Longleftrightarrow}\]
        \vspace{12\baselineskip}
        \[\overset{\text{(2c)}}{\Longleftrightarrow}\]
    \end{minipage} \hfill
    \begin{minipage}{0.45\textwidth}
        \centering
        \begin{asy}[width=\textwidth]
            drawvertices();
            label(scale(2/3)*"$v$", vertex(1));

            edgelabel2(1, 2, 1, dir(150), dir(30));
            edgelabel2(2, 4, 14);
            edgelabel2(4, 3, 12);
            edgelabel2(3, 2, 13);
            edgelabel2(2, 0, 6);
            edgelabel2(0, 0, 7, dir(-90), dir(-54));
            edgelabel2(0, 5, 10);
            edgelabel2(5, 4, 11);
            edgelabel2(4, 0, 15);
            edgelabel2(0, 0, 9, dir(-18), dir(18));
            edgelabel2(0, 0, 8, dir(54), dir(90));
            edgelabel2(0, 1, 16);
            edgelabel2(1, 5, 4);
            edgelabel2(5, 2, 5);
            edgelabel2(2, 2, 2, dir(105), dir(165));
            edgelabel2(2, 1, 3);

            currentpicture = shift((0, 2.5)) * currentpicture;

            label("$+$", (0.2, 1.3));
            
            drawvertices();
            label(scale(2/3)*"$v$", vertex(1));

            edgelabel1(1, 2, 1, dir(150), dir(30));
            edgelabel1(2, 4, 2);
            edgelabel1(4, 3, 3);
            edgelabel1(3, 2, 4);
            edgelabel1(2, 0, 5);
            edgelabel1(0, 0, 6, dir(-90), dir(-54));
            edgelabel1(0, 5, 7);
            edgelabel1(5, 4, 8);
            edgelabel1(4, 0, 9);
            edgelabel1(0, 0, 10, dir(-18), dir(18));
            edgelabel1(0, 0, 11, dir(54), dir(90));
            edgelabel1(0, 1, 12);
            edgelabel1(1, 5, 13);
            edgelabel1(5, 2, 14);
            edgelabel1(2, 2, 15, dir(105), dir(165));
            edgelabel1(2, 1, 16);

            rectangle((1.6, 2.5+1.6), (-1.2, -1.2));

            currentpicture = shift((0, 3.4)) * currentpicture;

            rectangle((1.6, 1.6), (-1.2, -1.2));

            currentpicture = scale(0.95) * currentpicture;
            
            drawvertices();

            drawedge(1, 2, dir(150), dir(30));
            drawedge(2, 4);
            drawedge(4, 3);
            drawedge(3, 2, isthick=true);
            drawedge(2, 0);
            drawedge(0, 0, dir(-90), dir(-54));
            drawedge(0, 5);
            drawedge(5, 4);
            drawedge(4, 0, isthick=true);
            drawedge(0, 0, dir(-18), dir(18));
            drawedge(0, 0, dir(54), dir(90));
            drawedge(0, 1, isthick=true);
            drawedge(1, 5);
            drawedge(5, 2, isthick=true);
            drawedge(2, 2, dir(105), dir(165));
            drawedge(2, 1, isthick=true);
    
            connector(2, dir(30), dir(105));
            connector(2, dir(165), dir(0));
            connector(1, dir(180), dir(270));
            connector(5, dir(90), dir(120));
            connector(2, dir(-60), dir(-30));
            connector(0, dir(150), dir(270));
            connector(0, dir(-54), dir(54));
            connector(0, dir(90), dir(-18));
            connector(0, dir(18), dir(240));
            connector(5, dir(60), dir(180));
            connector(4, dir(0), dir(120));
            connector(3, dir(-60), dir(60));
            connector(2, dir(-120), dir(-90));
            connector(4, dir(90), dir(30));
            connector(0, dir(210), dir(120));
            connector(1, dir(-60), dir(150));
    
            exitcounter(0, 1, dir(270));
            exitcounter(0, 2, dir(240));
            exitcounter(0, 3, dir(-18));
            exitcounter(0, 4, dir(54));
            exitcounter(0, 5, dir(120));
    
            exitcounter(1, 1, dir(150));
            exitcounter(1, 2, dir(-90));
    
            exitcounter(2, 1, dir(-90));
            exitcounter(2, 2, dir(-30));
            exitcounter(2, 3, dir(105));
            exitcounter(2, 4, dir(0));
    
            exitcounter(3, 1, dir(60));
    
            exitcounter(4, 1, dir(120));
            exitcounter(4, 2, dir(30));
    
            exitcounter(5, 1, dir(180));
            exitcounter(5, 2, dir(120));

            fill(circle(vertex(1), 0.05), white+opacity(2/3));
            label(scale(2/3)*"$v$", vertex(1));
        \end{asy}
    \end{minipage}
    \caption{The bijection in \cref{lemma:bijectionstep2}.}
    \label{figure:bijectionstep2}
\end{figure} \clearpage

\begin{proof}
    Consider a pair of Eulerian tours on a $(s_1, \dots, s_r)$-degree digraph. As shown in step (2a), treat the tours as separate, and call the origin of the common starting edge vertex $v$. As shown in step (2b), wire each vertex according to the first tour, and order the exits around each vertex according the traversal order of the second tour. Combining these gives a structure consisting of a choice of $v$, wirings, and exit orders, as shown in step (2c).

    To prove that this gives the desired bijection, it suffices to check that
    \begin{itemize}
        \item[(i)] the last exits of $\{1, 2, \dots, r\} \setminus \{v\}$ always form a spanning tree directed towards $v$, and
        \item[(ii)] every choice of exit orderings such that the last exits of $\{1, 2, \dots, r\} \setminus \{v\}$ form a spanning tree directed towards $v$ gives rise to an Eulerian tour.
    \end{itemize}
    Indeed, (i) is true because
    \begin{itemize}
        \item the last exits of $\{1, 2, \dots, r\}$ must form a subgraph with $r-1$ edges,
        \item this subgraph cannot contain any cycles because the exit orderings were induced from an Eulerian tour, and
        \item every edge is directed towards $v$ by construction.
    \end{itemize}
    (ii) is true because the Eulerian tour can be obtained by starting at $v$ and repeatedly traversing the first untraversed edge in each exit ordering.
\end{proof}

Finally, the third subpart of \cref{theorem:bijection}'s proof, depicted in \cref{figure:bijectionstep3}, is motivated by the Pr\"ufer sequence:
\begin{lemma}\label{lemma:bijectionstep3}
    Let $s_1, \dots, s_r$ be a sequence of positive integers summing to $n$, and let $S$ be the $n$-element set
    \[\{(1, 1), (1, 2), \dots, (1, s_1), (2, 1), (2, 2), \dots, (2, s_2), \dots, (r, 1), (r, 2), \dots, (r, s_r)\}.\]
    There is a bijection between structures on degree-$(s_1, \dots, s_r)$ digraphs consisting of
    \begin{itemize}
        \item a choice of vertex $v \in \{1, 2, \dots, r\}$,
        \item a choice of wiring for each vertex, and 
        \item a choice of exit orderings for each vertex
    \end{itemize}
    such that the wirings induce a cycle on the digraph's edges and the last exits of $\{1, 2, \dots, v\} \setminus \{v\}$ form a spanning tree directed towards $v$, and ways to arrange the elements of $S$ to form an $(r-1)$-term sequence and an $(n-r+1)$-element cycle.
\end{lemma}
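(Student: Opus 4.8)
The plan is to first absorb the exit orderings into a relabeling of the edges, which reduces the lemma to a statement purely about cyclic arrangements of $S$. Given a degree-$(s_1,\dots,s_r)$ digraph together with an exit ordering of each vertex, label the $j$-th outgoing edge of vertex $i$ (in $i$'s exit ordering) by the element $(i,j)\in S$; this is a bijection from the $n$ edges to $S$. Under this labeling a wiring that induces a single cycle becomes a cyclic arrangement $C$ of the $n$ elements of $S$, and conversely the digraph, the wiring, and the exit orderings can all be read back off $C$: the tail of $(i,j)$ is $i$, the head of $(i,j)$ is the first coordinate of the successor of $(i,j)$ in $C$, and the wiring at each vertex pairs each incoming edge with the edge that follows it in $C$. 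Moreover the last exit of a vertex $u$ is exactly the element $(u,s_u)$, so the spanning-tree hypothesis becomes: the $r-1$ edges $(u,s_u)$ for $u\neq v$, each directed from $u$ to the first coordinate of its $C$-successor, form a spanning tree $T$ directed toward $v$. It therefore suffices to build a bijection between pairs $(C,v)$ of this kind and pairs consisting of an $(r-1)$-term sequence and an $(n-r+1)$-element cycle on $S$.

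For the forward direction I would peel the tree $T$ from the root downward: repeatedly choose a not-yet-peeled non-root vertex all of whose $T$-ancestors have already been peeled, breaking ties by a fixed deterministic rule, to obtain an ordering $u_1,\dots,u_{r-1}$ of $\{1,\dots,r\}\setminus\{v\}$. Starting from $C^{(0)}=C$, when $u_k$ is peeled let $g_k$ be the successor of the tree edge $(u_k,s_{u_k})$ in $C^{(k-1)}$, append $g_k$ to the sequence, and delete $g_k$ from the cyclic arrangement by splicing its two neighbors together to form $C^{(k)}$. The output is the sequence $(g_1,\dots,g_{r-1})$ together with the cyclic arrangement $C^{(r-1)}$ of the remaining $n-r+1$ elements. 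Two observations make this well defined. First, since the successor map of $C$ is a bijection, one checks inductively that $g_k$ is always the $C$-successor of $(u_k,s_{u_k})$ — not merely its successor in $C^{(k-1)}$ — so the $g_k$ are pairwise distinct. Second, if $g_k$ happens to equal a tree edge $(w,s_w)$, then $w$ is the head of $(u_k,s_{u_k})$, i.e.\ the $T$-parent of $u_k$, which was peeled before $u_k$; hence no tree edge is ever deleted before the step at which its vertex is peeled, so every $(u_k,s_{u_k})$ is still present when $u_k$ is processed. Finally, the first coordinates $(p_1,\dots,p_{r-1})$ of $(g_1,\dots,g_{r-1})$ are exactly the $T$-parents of $u_1,\dots,u_{r-1}$, and $p_1=v$.

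For the inverse, recover $v$ as the first coordinate of $g_1$, reconstruct the tree $T$ and the peeling order $u_1,\dots,u_{r-1}$ from the parent code $(p_1,\dots,p_{r-1})$ by a Pr\"ufer-style decoding — the acyclicity of $T$ together with the tie-breaking rule pins down which vertex is peeled at each step — and then rebuild $C$ by re-inserting $g_{r-1},g_{r-2},\dots,g_1$ (in that order) into $C^{(r-1)}$, placing each $g_k$ immediately after the edge $(u_k,s_{u_k})$, which is present in the current arrangement at that moment. Verifying that this produces a well-defined $(C,v)$ satisfying the tree condition and that the two constructions are mutually inverse completes the bijection; composing it with \cref{lemma:bijectionstep1} and \cref{lemma:bijectionstep2} proves \cref{theorem:bijection}.

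I expect the invertibility in the last step to be the main obstacle. The cyclic arrangement is being dismantled at the same time as the tree is being traversed, so the tree reconstruction (from the parent code) and the cyclic-arrangement reconstruction (from the recorded elements and their re-insertion positions) are intertwined, and one must argue they are consistent. The structural fact that untangles them is precisely that the peeling is carried out top-down: this guarantees both that each tree edge survives until its vertex is peeled and reinserted, and that the parent of each peeled vertex is processed earlier, so that the parent code can be decoded unambiguously. Choosing the tie-breaking rule and the bookkeeping so that both directions are genuinely inverse — and writing out the verification cleanly — is where the care will be needed.
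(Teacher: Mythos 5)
Your reduction to cyclic arrangements of $S$ — labeling the $j$-th exit of vertex $i$ by $(i,j)$, so that the wiring becomes a cyclic arrangement $C$, the tree edge at $u$ becomes $(u,s_u)$, and the recorded entries are the $C$-successors of the tree edges — is exactly the paper's setup, and your forward map deletes the same $r-1$ elements and produces the same $(n-r+1)$-cycle as the paper's. The genuine gap is in the inverse: you order the sequence by a \emph{top-down} peeling of $T$ (ancestors before descendants) and claim that the parent code $(p_1,\dots,p_{r-1})$, together with a fixed tie-breaking rule, pins down $T$ and the peeling order. That claim is false. Take $r=4$, root $v=4$, and compare the trees with edge sets $\{1\to 4,\ 3\to 4,\ 2\to 3\}$ and $\{2\to 4,\ 3\to 4,\ 1\to 3\}$: under smallest-index-available top-down peeling they are peeled in the orders $1,3,2$ and $2,3,1$ respectively, and both yield the parent code $(4,4,3)$. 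Nothing you invoke at the tree-reconstruction step breaks this tie — the ordinals $j_k$ sit on outgoing edges of the parents and are unconstrained by which child was peeled — so the ``Pr\"ufer-style decoding'' is not well defined, and with it the recovery of the peeling order $u_1,\dots,u_{r-1}$ that your re-insertion step requires.

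The reason genuine Pr\"ufer decoding works is that leaves of the current tree are detectable from the code: a vertex is currently a leaf iff its index no longer occurs among the remaining parent entries. A top-down peeling destroys exactly this property. The paper's proof of \cref{lemma:bijectionstep3} avoids the problem by peeling \emph{leaves}, smallest index first, recording for each deleted leaf the element in front of its tree edge; decodability is then the standard Pr\"ufer argument (and is precisely the inverse algorithm the paper writes down). Note also that with leaf-first peeling your two bookkeeping observations reverse pleasantly: since children are peeled before parents, the element $(u,s_u)$, if it was deleted at all, was recorded at an earlier step, so the marked elements can be re-inserted into the $(n-r+1)$-cycle simply in the order the sequence is read. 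Replacing your top-down peeling by the leaf-first peeling repairs the argument and essentially reproduces the paper's proof; as written, however, the inverse does not exist.
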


\begin{proof}
    Consider such a structure on a degree-$(s_1, \dots, s_r)$ digraph. As shown in step (3a), follow the wiring and skip the label in front of each tree edge to form an $(n-r+1)$-cycle of elements in $S$. Next, delete each non-tree edge. Then, as shown in step (3b), ignore the labels not associated with any tree edges. Finally, construct an $(r-1)$-element sequence of $S$ by repeatedly deleting the smallest-index leaf and adding the corresponding element of $S$ to the sequence, as shown in step (3c).

    To show that this process is indeed a bijection, it suffices to describe the inverse of the digraph-to-sequence process. To do this, start with an $(r-1)$-element sequence of elements in $S$, and repeat the following algorithm $r-1$ times:
    \begin{itemize}
        \item Find the smallest-index color, $i$, that does not appear in the sequence.
        \item Draw an arrow from vertex $i$ to the color of the first element of the sequence, labeled by the corresponding ordinal.
        \item Delete the first element of the sequence. \qedhere
    \end{itemize}
\end{proof}

\begin{figure}[p]
    \centering
    \begin{minipage}{0.45\textwidth}
        \centering
        \begin{asy}[width=\textwidth]
            drawvertices();

            drawedge(1, 2, dir(150), dir(30));
            drawedge(2, 4);
            drawedge(4, 3);
            drawedge(3, 2, isthick=true);
            drawedge(2, 0);
            drawedge(0, 0, dir(-90), dir(-54));
            drawedge(0, 5);
            drawedge(5, 4);
            drawedge(4, 0, isthick=true);
            drawedge(0, 0, dir(-18), dir(18));
            drawedge(0, 0, dir(54), dir(90));
            drawedge(0, 1, isthick=true);
            drawedge(1, 5);
            drawedge(5, 2, isthick=true);
            drawedge(2, 2, dir(105), dir(165));
            drawedge(2, 1, isthick=true);

            connector(2, dir(30), dir(105));
            connector(2, dir(165), dir(0));
            connector(1, dir(180), dir(270));
            connector(5, dir(90), dir(120));
            connector(2, dir(-60), dir(-30));
            connector(0, dir(150), dir(270));
            connector(0, dir(-54), dir(54));
            connector(0, dir(90), dir(-18));
            connector(0, dir(18), dir(240));
            connector(5, dir(60), dir(180));
            connector(4, dir(0), dir(120));
            connector(3, dir(-60), dir(60));
            connector(2, dir(-120), dir(-90));
            connector(4, dir(90), dir(30));
            connector(0, dir(210), dir(120));
            connector(1, dir(-60), dir(150));

            exitcounter(0, 1, dir(270));
            exitcounter(0, 2, dir(240));
            exitcounter(0, 3, dir(-18));
            exitcounter(0, 4, dir(54));
            exitcounter(0, 5, dir(120));

            exitcounter(1, 1, dir(150));
            exitcounter(1, 2, dir(-90));

            exitcounter(2, 1, dir(-90));
            exitcounter(2, 2, dir(-30));
            exitcounter(2, 3, dir(105));
            exitcounter(2, 4, dir(0));

            exitcounter(3, 1, dir(60));

            exitcounter(4, 1, dir(120));
            exitcounter(4, 2, dir(30));

            exitcounter(5, 1, dir(180));
            exitcounter(5, 2, dir(120));

            rectangle((-1.2, -1.2), (1.6, 1.6));

            fill(circle(vertex(1), 0.05), white+opacity(2/3));
            label(scale(2/3)*"$v$", vertex(1));

            currentpicture = shift((0, 3.2)) * currentpicture;
            currentpicture = shift((-0.2, 0)) * currentpicture;
            
            drawvertices();
            label(scale(2/3)*"$v$", vertex(1));
            
            drawedge(3, 2, isthick=true);
            drawedge(4, 0, isthick=true);
            drawedge(0, 1, isthick=true);
            drawedge(5, 2, isthick=true);
            drawedge(2, 1, isthick=true);

            exitcounter(0, 5, 1.5*dir(210));
            exitcounter(1, 1, 1.5*dir(180));
            exitcounter(1, 2, 1.5*dir(-60));
            exitcounter(2, 1, 1.5*dir(-120));
            exitcounter(2, 2, 1.5*dir(-60));

            currentpicture = scale(1.1/1.4) * currentpicture;
            drawcycle(new int[] {3, 4, 2, 1, 4, 3, 2, 1, 1, 1, 2}, new int[] {2, 2, 5, 0, 0, 0, 0, 5, 4, 3, 4}, c =(0, -2), s=2/3, card=true, labelsize=2/3, colorboundary=true);
            currentpicture = scale(1.4/1.1) * currentpicture;

            label("$+$", (0, -1.25));

            rectangle((-1.4, -3.7), (1.4, 1.4));
        \end{asy}
    \end{minipage} \hfill
    \begin{minipage}{0.05\textwidth}
        \[\overset{\text{(3a)}}{\Longleftrightarrow}\]
        \vspace{11\baselineskip}
        \[\overset{\text{(3b)}}{\Longleftrightarrow}\]
        \vspace{11\baselineskip}
        \[\overset{\text{(3c)}}{\Longleftrightarrow}\]
    \end{minipage} \hfill
    \begin{minipage}{0.45\textwidth}
        \centering
        \begin{asy}[width=\textwidth]
            drawvertices();
            
            drawedge(3, 2, isthick=true);
            drawedge(4, 0, isthick=true);
            drawedge(0, 1, isthick=true);
            drawedge(5, 2, isthick=true);
            drawedge(2, 1, isthick=true);

            connector(2, dir(30), dir(105));
            connector(2, dir(165), dir(0));
            connector(1, dir(180), dir(270));
            connector(5, dir(90), dir(120));
            connector(2, dir(-60), dir(-30));
            connector(0, dir(150), dir(270));
            connector(0, dir(-54), dir(54));
            connector(0, dir(90), dir(-18));
            connector(0, dir(18), dir(240));
            connector(5, dir(60), dir(180));
            connector(4, dir(0), dir(120));
            connector(3, dir(-60), dir(60));
            connector(2, dir(-120), dir(-90));
            connector(4, dir(90), dir(30));
            connector(0, dir(210), dir(120));
            connector(1, dir(-60), dir(150));

            exitcounter(0, 1, dir(270));
            exitcounter(0, 2, dir(240));
            exitcounter(0, 3, dir(-18));
            exitcounter(0, 4, dir(54));
            exitcounter(0, 5, dir(120));

            exitcounter(1, 1, dir(150));
            exitcounter(1, 2, dir(-90));

            exitcounter(2, 1, dir(-90));
            exitcounter(2, 2, dir(-30));
            exitcounter(2, 3, dir(105));
            exitcounter(2, 4, dir(0));

            exitcounter(3, 1, dir(60));

            exitcounter(4, 1, dir(120));
            exitcounter(4, 2, dir(30));

            exitcounter(5, 1, dir(180));
            exitcounter(5, 2, dir(120));

            fill(circle(vertex(1), 0.05), white+opacity(2/3));
            label(scale(2/3)*"$v$", vertex(1));

            currentpicture = scale(1.1/1.4) * currentpicture;
            drawcycle(new int[] {3, 4, 2, 1, 4, 3, 2, 1, 1, 1, 2}, new int[] {2, 2, 5, 0, 0, 0, 0, 5, 4, 3, 4}, c =(0, -2), s=2/3, card=true, labelsize=2/3, colorboundary=true);
            currentpicture = scale(1.4/1.1) * currentpicture;

            label("$+$", (0, -1.25));

            rectangle((-1.4, -3.7), (1.4, 1.4));
            
            currentpicture = shift((0, 4.55)) * currentpicture;
            currentpicture = scale(1.1/1.4) * currentpicture;

            numbercircle(2, 1, (-0.6, 0));
            numbercircle(0, 5, (-0.3, 0));
            numbercircle(1, 2, (0, 0));
            numbercircle(2, 2, (0.3, 0));
            numbercircle(1, 1, (0.6, 0)); 

            drawcycle(new int[] {3, 4, 2, 1, 4, 3, 2, 1, 1, 1, 2}, new int[] {2, 2, 5, 0, 0, 0, 0, 5, 4, 3, 4}, c =(0, -7/6), s=2/3, card=true, labelsize=2/3, colorboundary=true);

            label("$+$", (0, -0.25));

            rectangle((-1.1, -2), (1.1, 0.2));
        \end{asy}
    \end{minipage}
    \caption{The bijection in \cref{lemma:bijectionstep3}.}
    \label{figure:bijectionstep3}
\end{figure} \clearpage

Combining these lemmas gives:

\begin{proof}[Proof of \cref{theorem:bijection}]
    Chaining \cref{lemma:bijectionstep1}, \cref{lemma:bijectionstep2}, and \cref{lemma:bijectionstep3} gives the desired result.
\end{proof}

Now $(s_1, \dots, s_r)$-colored $n$-cycles and $r$-colored $n$-cycles can be counted via the next two corollaries:
\begin{corollary}\label{corollary:seqcoloredcycle}
    Let $s_1, \dots, s_r$ be a sequence of positive integers summing to $n$. Then
    \[\#\{\text{$(s_1, \dots, s_r)$-colored $n$-cycles}\}=\frac{n!}{n-r+1}.\]
\end{corollary}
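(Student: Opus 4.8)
The plan is to apply \cref{theorem:bijection} and then carry out a routine enumeration. That theorem establishes a bijection between $(s_1,\dots,s_r)$-colored $n$-cycles and ways to split the $n$-element set $S$ into an ordered $(r-1)$-term sequence of distinct elements of $S$ together with a cyclic arrangement of the remaining $n-r+1$ elements. So the task reduces to counting these paired structures, with no combinatorial subtlety remaining beyond bookkeeping.

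First I would count the ordered $(r-1)$-term sequences drawn from the $n$ elements of $S$: there are $n(n-1)\cdots(n-r+2) = n!/(n-r+1)!$ of them. Once those $r-1$ elements are removed, the remaining $n-r+1$ elements of $S$ must be arranged in a cycle; since a cycle on $m$ labeled objects has $(m-1)!$ distinct arrangements, this contributes a factor of $(n-r)!$. (The extreme case $r=n$ gives a single-element cycle, for which the convention $0!=1$ keeps the formula valid; the extreme case $r=1$ gives an empty sequence and an $n$-element cycle, again consistent.)

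Multiplying the two independent choices yields
\[
\frac{n!}{(n-r+1)!}\cdot (n-r)! = \frac{n!}{n-r+1},
\]
which is exactly the claimed count. The only points requiring care are the $(m-1)!$ cyclic-arrangement convention and the correct cancellation of factorials; all of the genuine content is already packaged inside \cref{theorem:bijection}, so there is essentially no obstacle to overcome at this stage.
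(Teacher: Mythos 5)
Your proof is correct and follows essentially the same route as the paper: invoke \cref{theorem:bijection} and then count arrangements of the $n$-element set into an $(r-1)$-term sequence plus an $(n-r+1)$-element cycle. The only cosmetic difference is that you count directly via the $(m-1)!$ cyclic-arrangement formula, whereas the paper notes a $1$-to-$(n-r+1)$ correspondence with permutations of $n$ elements by unwrapping the cycle; both give $\frac{n!}{n-r+1}$.
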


\begin{proof}
    By \cref{theorem:bijection}, it suffices to count the number of ways to arrange an $n$-element set into an $(r-1)$-term sequence and an $(n-r+1)$-element cycle. There is a 1-to-$(n-r+1)$ map from such ways to permutations of $n$ elements, given by unwrapping the cycle at one of $n-r+1$ possible locations and appending it to the $(r-1)$-term sequence. This gives the desired result.
\end{proof}

\begin{corollary}\label{corollary:coloredcycle}
    Let $r$ and $n$ be positive integers. Then
    \[\#\{\text{$r$-colored $n$-cycles}\} = \binom{n-1}{r-1} \frac{n!}{n-r+1}.\]
\end{corollary}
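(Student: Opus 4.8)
The plan is to stratify the set of $r$-colored $n$-cycles by their color-multiplicity vector. Given an $r$-colored $n$-cycle, let $s_i$ denote the number of elements of $\{1,\dots,n\}$ that receive color $i$. Because the definition of an $r$-colored $n$-cycle demands that every color be used at least once, each $s_i$ is a positive integer, and clearly $s_1+\dots+s_r=n$. Conversely, any sequence $s_1,\dots,s_r$ of positive integers summing to $n$ is realized. Thus the $r$-colored $n$-cycles partition into disjoint classes indexed by such sequences, and by definition the class indexed by $(s_1,\dots,s_r)$ is exactly the set of $(s_1,\dots,s_r)$-colored $n$-cycles.

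Next I would invoke \cref{corollary:seqcoloredcycle}, which says that each of these classes has the same size $\frac{n!}{n-r+1}$, regardless of the particular sequence. Summing over all admissible sequences gives
\[\#\{\text{$r$-colored $n$-cycles}\} = \sum_{\substack{s_1,\dots,s_r \geq 1 \\ s_1+\dots+s_r=n}} \frac{n!}{n-r+1} = \frac{n!}{n-r+1}\cdot \#\left\{\substack{\text{compositions of $n$}\\\text{into $r$ positive parts}}\right\}.\]
Finally, the number of compositions of $n$ into $r$ positive parts is $\binom{n-1}{r-1}$ by a standard stars-and-bars argument (choosing which $r-1$ of the $n-1$ gaps between $n$ units to cut), which yields the claimed formula $\binom{n-1}{r-1}\frac{n!}{n-r+1}$.

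There is no real obstacle here: all of the combinatorial content lives in the bijection of \cref{theorem:bijection} and its consequence \cref{corollary:seqcoloredcycle}, and this corollary is a short bookkeeping step. The only points requiring care are that the strata are genuinely disjoint and exhaustive (immediate from the definitions, since the multiplicity vector is determined by the coloring) and that $s_i\ge 1$ for every $i$ (forced by the requirement that every color be used), so that exactly the compositions into positive parts — and no weak compositions — appear in the sum.
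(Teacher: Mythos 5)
Your proof is correct and takes essentially the same route as the paper: the paper's own proof of this corollary is the one-line observation that there are $\binom{n-1}{r-1}$ compositions of $n$ into $r$ positive parts, combined with \cref{corollary:seqcoloredcycle}. You have simply spelled out the (correct and immediate) stratification argument in more detail.
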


\begin{proof}
    The number of ways to choose $r$ positive integers $s_1, \dots, s_r$ summing to $n$ is $\binom{n-1}{r-1}$, so using \cref{corollary:seqcoloredcycle} gives the result.
\end{proof}

Finally, the bijection in \cref{theorem:bijection} can be extended to count $t$-colored $k$-subsets of $r$-colored $n$-cycles.
To do this, define an \emph{$r$-colored $n$-strip} to be a coloring of $\{1, 2, \dots, n\}$ with $r$ colors in nondecreasing order, and define a \emph{$t$-colored $k$-subset of an $r$-colored $n$-strip} to be an $r$-colored $n$-strip and a $k$-element subset of $\{1, 2, \dots, n\}$ colored with $t$ distinct colors. An example of a 6-colored 16-strip is given in \cref{figure:coloredstrip}, and an example of a 3-color 5-subset of a 6-colored 16-strip is given in \cref{figure:coloredstripsubset}.

\begin{figure}[ht]
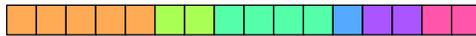

    \centering
    \begin{asy}[width = 0.5\textwidth]
        int boxcolors[] = {0, 0, 0, 0, 0, 1, 1, 2, 2, 2, 2, 3, 4, 4, 5, 5};
        for (int i = 0; i < 16; ++i) {
            path p = (i, 0) -- (i, 1) -- (i+1, 1) -- (i+1, 0) -- cycle;
            filldraw(p, rainbow(boxcolors[i]/6, 1/3));
        }
    \end{asy}
    \caption{A 6-colored 16-strip.}
    \label{figure:coloredstrip}
\end{figure}

\begin{figure}[ht]
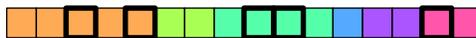

    \centering
    \begin{asy}[width = 0.5\textwidth]
        int boxcolors[] = {0, 0, 0, 0, 0, 1, 1, 2, 2, 2, 2, 3, 4, 4, 5, 5};
        int boxspecial[] = {2, 4, 8, 9, 14};
        for (int i = 0; i < 16; ++i) {
            path p = (i, 0) -- (i, 1) -- (i+1, 1) -- (i+1, 0) -- cycle;
            filldraw(p, rainbow(boxcolors[i]/6, 1/3));
        }
        for (int j = 0; j < boxspecial.length; ++j) {
            int i = boxspecial[j];
            path p = (i, 0) -- (i, 1) -- (i+1, 1) -- (i+1, 0) -- cycle;
            draw(p, linewidth(2));
        }
    \end{asy}
    \caption{A 3-color 5-subset of a 6-colored 16-strip.}
    \label{figure:coloredstripsubset}
\end{figure}

The bijection extension, depicted in \cref{figure:extendedbijection} and \cref{figure:coloredstripcounting}, is given by the following two lemmas:

\begin{lemma}\label{lemma:extendedbijection}
    Let $r$, $n$, $t$, and $k$ be positive integers. There is a bijection between $t$-colored $k$-subsets of an $r$-colored $n$-cycle and structures consisting of
    \begin{itemize}
        \item a $t$-colored $k$-subset of an $r$-colored $n$-strip, and
        \item an $(s_1, \dots, s_r)$-colored $n$-cycle, where $s_i$ is the number of elements in the $r$-colored $n$-strip with color $i$ for each $1 \leq i \leq r$.
    \end{itemize}
\end{lemma}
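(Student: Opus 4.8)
The plan is to give the bijection explicitly, the key device being a ``color-sorting'' permutation attached to each coloring. Given a coloring $c$ of $\{1,2,\dots,n\}$ with color classes $c^{-1}(1),\dots,c^{-1}(r)$ of sizes $s_1,\dots,s_r$, let $\phi_c$ be the permutation of $\{1,2,\dots,n\}$ that, for each color $i$, sends the $j$-th smallest element of $c^{-1}(i)$ to $s_1+\dots+s_{i-1}+j$ (with the convention that the empty sum is $0$); equivalently, $\phi_c$ is the unique bijection that carries each class $c^{-1}(i)$ onto the $i$-th block $\{s_1+\dots+s_{i-1}+1,\dots,s_1+\dots+s_i\}$ of the $r$-colored $n$-strip with composition $(s_1,\dots,s_r)$ while preserving the natural order within each class. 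Crucially, $\phi_c$ is determined by $c$ alone.

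The forward map sends a $t$-colored $k$-subset $A$ of an $r$-colored $n$-cycle $(\sigma,c)$ to: the $r$-colored $n$-strip with composition $(s_1,\dots,s_r)=(|c^{-1}(1)|,\dots,|c^{-1}(r)|)$ carrying the marked $k$-subset $\phi_c(A)$, together with the $(s_1,\dots,s_r)$-colored $n$-cycle $(\sigma,c)$ itself. For the inverse, given an $r$-colored $n$-strip (composition $(s_1,\dots,s_r)$, marked $k$-subset $B$) and an $(s_1,\dots,s_r)$-colored $n$-cycle $(\sigma,c)$, recover $\phi_c$ from $c$---the size constraint ensures that $\phi_c$ maps the classes of $c$ exactly onto the blocks of the strip---and output the $r$-colored $n$-cycle $(\sigma,c)$ with marked $k$-subset $\phi_c^{-1}(B)$. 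These maps are mutually inverse because $\phi_c$ depends only on $c$, which both maps leave untouched, and because $\phi_c$ is a bijection of $\{1,2,\dots,n\}$, so $A\mapsto\phi_c(A)$ is a bijection on $k$-subsets. Well-definedness on each side reduces to checking that the marked subsets carry exactly $t$ colors, which holds because $\phi_c$ carries $c^{-1}(i)$ onto the $i$-th block, so the set of colors met by $\phi_c(A)$ in the strip equals the set of colors met by $A$ in the cycle.

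The conceptual point---and the thing I expect to be the only place needing a moment's thought---is that the $(s_1,\dots,s_r)$-colored $n$-cycle on the right-hand side does double duty: it supplies both the cycle and, through its coloring $c$, the permutation $\phi_c$ needed to translate between ``actual'' coordinates and the sorted coordinates of the strip, so that no further data has to be transferred. Once $\phi_c$ is in hand, the verification that it preserves the property of meeting exactly $t$ color classes (respectively blocks) is immediate from the block structure of the strip, and everything else is routine bookkeeping. An instance of the bijection is illustrated in \cref{figure:extendedbijection}.
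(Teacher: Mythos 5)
Your proof is correct and follows essentially the same route as the paper: a stable sort by color whose sorting permutation is determined by the coloring alone, which is retained in the $(s_1,\dots,s_r)$-colored $n$-cycle component and hence invertible. The only cosmetic difference is that you sort using the natural order on $\{1,\dots,n\}$ while the paper sorts using the cyclic order of $\sigma$ cut before the element $1$; either choice works since both orders are recoverable from the retained data.
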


\newpage

\begin{figure}[h]
    \centering
    \begin{minipage}{0.45\textwidth}
        \centering
        \begin{asy}[width=\textwidth]
            int p[] = {1, 14, 12, 13, 6, 7, 10, 11, 15, 9, 8, 16, 4, 5, 2, 3};
            int colors[] = {1, 2, 4, 3, 2, 0, 0, 5, 4, 0, 0, 0, 1, 5, 2, 2};
            drawcycle(p, colors, special = new int[] {2, 6, 9, 11, 16}, labelsize=4/5);

            rectangle((1.2, 1.4), (-1.2, -1.4));
        \end{asy}
    \end{minipage}
    \begin{minipage}{0.05\textwidth}
        \[\overset{\text{(4a)}}{\Longrightarrow}\]
    \end{minipage}
    \begin{minipage}{0.45\textwidth}
        \centering
        \begin{asy}[width=\textwidth]
            int p[] = {1, 14, 12, 13, 6, 7, 10, 11, 15, 9, 8, 16, 4, 5, 2, 3};
            int colors[] = {1, 2, 4, 3, 2, 0, 0, 5, 4, 0, 0, 0, 1, 5, 2, 2};
            drawcycle(p, colors, labelsize=4/5);
            
            int boxcolors[] = {0, 0, 0, 0, 0, 1, 1, 2, 2, 2, 2, 3, 4, 4, 5, 5};
            int boxspecial[] = {2, 4, 8, 9, 14};
            for (int i = 0; i < 16; ++i) {
                pair P = (-1 + i/8, -1.4);
                real l = 1/8;
                path p = P -- P+(l,0) -- P+(l,l) -- P+(0,l) -- cycle;
                filldraw(p, rainbow(boxcolors[i]/6, 1/3));
            }
            for (int j = 0; j < boxspecial.length; ++j) {
                int i = boxspecial[j];
                pair P = (-1 + i/8, -1.4);
                real l = 1/8;
                path p = P -- P+(l,0) -- P+(l,l) -- P+(0,l) -- cycle; 
                draw(p, linewidth(2));
            }
            rectangle((1.2, 1.2), (-1.2, -1.6));
        \end{asy}
    \end{minipage}
    \caption{The bijection in \cref{lemma:extendedbijection}.}
    \label{figure:extendedbijection}
\end{figure}

\begin{proof}
    Consider a $t$-colored $k$-subset of an $r$-colored $n$-cycle, as shown in \cref{figure:extendedbijection}. 
    Cutting it before the ``1'', removing the numbers, and sorting the colors while preserving the relative order of the elements in the subset gives a $t$-colored $k$-subset of a $r$-colored $n$-strip. Forgetting about the subset gives the $(s_1, \dots, s_r)$-colored $n$-cycle, where $s_i$ is the number of elements in the $r$-colored $n$-cycle with color $i$ for each $1 \leq i \leq r$. This process is reversible, so it is a bijection.
\end{proof}

\begin{lemma}\label{lemma:coloredstrip}
    There is a bijection between $t$-colored $k$-subsets of $r$-colored $n$-strips and structures consisting of an $(r+k)$-colored $(n+t)$-strip, a $t$-element subset of $\{1, 2, \dots, r\}$, and a $t$-colored $k$-strip.
\end{lemma}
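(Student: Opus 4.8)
The plan is to regard an $r$-colored $n$-strip as a composition $(s_1,\dots,s_r)$ of $n$ into $r$ positive parts: color $i$ occupies a block of $s_i$ consecutive elements, and the strip is the concatenation of these blocks. Given a $t$-colored $k$-subset of such a strip, two of the three output pieces come for free. Let $c_1<c_2<\dots<c_t$ be the colors that the chosen $k$-element set meets; this list is the required $t$-element subset of $\{1,\dots,r\}$. Let $k_j\ge 1$ be the number of chosen elements lying in block $c_j$, so that $k_1+\dots+k_t=k$; coloring $\{1,\dots,k\}$ by assigning the first $k_1$ elements color $1$, the next $k_2$ elements color $2$, and so on gives the required $t$-colored $k$-strip (it is nondecreasing, and uses all $t$ colors since each $k_j\ge1$).

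The remaining information --- the sizes $s_i$ and, for each met color $c_j$, the positions $1\le p_1<\dots<p_{k_j}\le s_{c_j}$ of the chosen elements inside block $c_j$ --- I would package into a single $(r+k)$-colored $(n+t)$-strip, built block by block. A block whose color is not met is kept unchanged. A met block $c_j$ of size $s=s_{c_j}$ is replaced by the $k_j+1$ consecutive blocks of sizes $p_1,\ p_2-p_1,\ \dots,\ p_{k_j}-p_{k_j-1},\ s-p_{k_j}+1$; these are all positive (the last one because $p_{k_j}\le s$) and sum to $s+1$. The concatenation of all these blocks has $(r-t)+\sum_{j=1}^{t}(k_j+1)=r+k$ positive parts of total size $n+t$, i.e.\ it is an $(r+k)$-colored $(n+t)$-strip.

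To invert, begin with an $(r+k)$-colored $(n+t)$-strip, a $t$-subset $\{c_1<\dots<c_t\}\subseteq\{1,\dots,r\}$, and a $t$-colored $k$-strip read as a composition $(k_1,\dots,k_t)$ of $k$. This data prescribes how to regroup the $r+k$ blocks of the strip into $r$ consecutive ``meta-blocks'': meta-block $i$ is one block if $i\notin\{c_1,\dots,c_t\}$, and is $k_j+1$ consecutive blocks if $i=c_j$. A one-block meta-block is reinstated as an unmet color block of the same size; a meta-block $c_j$ with block sizes $a_0,\dots,a_{k_j}$ is reinstated as a color block of size $a_0+\dots+a_{k_j}-1$ with chosen positions the partial sums $a_0,\ a_0+a_1,\ \dots,\ a_0+\dots+a_{k_j-1}$. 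The one thing that really needs checking is that these two maps are mutually inverse, and positivity of all parts does the work: $a_{k_j}\ge1$ is exactly what forces the largest reconstructed position $a_0+\dots+a_{k_j-1}=(a_0+\dots+a_{k_j})-a_{k_j}$ not to exceed the reconstructed block size. The rest is bookkeeping that both maps land in the claimed sets. As a consistency check, a short generating-function computation in the block sizes shows that the number of $t$-colored $k$-subsets of $r$-colored $n$-strips equals $\binom{r}{t}\binom{k-1}{t-1}\binom{n+t-1}{r+k-1}$, which is precisely the product of the three cardinalities on the other side.
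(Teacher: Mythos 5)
Your construction is correct and is essentially the paper's own bijection in different clothing: the paper labels each box by the count of selected same-colored boxes up to and including it and prepends one extra box per met color, and the resulting runs of constant label are exactly your blocks of sizes $p_1,\ p_2-p_1,\ \dots,\ s-p_{k_j}+1$. The inverse and the identification of the three output pieces also match, so this is the same proof.
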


\begin{figure}[ht]
    \centering
    \begin{asy}[width = 0.5\textwidth]
        int boxcolors[] = {0, 0, 0, 0, 0, 1, 1, 2, 2, 2, 2, 3, 4, 4, 5, 5};
        int boxspecial[] = {2, 4, 8, 9, 14};
        for (int i = 0; i < 16; ++i) {
            path p = (i, 0) -- (i, 1) -- (i+1, 1) -- (i+1, 0) -- cycle;
            filldraw(p, rainbow(boxcolors[i]/6, 1/3));
        }
        for (int j = 0; j < boxspecial.length; ++j) {
            int i = boxspecial[j];
            path p = (i, 0) -- (i, 1) -- (i+1, 1) -- (i+1, 0) -- cycle;
            draw(p, linewidth(2));
        }
    \end{asy}
    \[\Downarrow\text{\scriptsize{(4b)}}\]
    \begin{asy}[width = 0.59375\textwidth]
        int boxcolors[] = {0, 0, 0, 0, 0, 0, 1, 1, 2, 2, 2, 2, 2, 3, 4, 4, 5, 5, 5};
        int labels[] = {0, 0, 0, 1, 1, 2, 0, 0, 0, 0, 1, 2, 2, 0, 0, 0, 0, 1, 1};
        for (int i = 0; i < 19; ++i) {
            path p = (i, 0) -- (i, 1) -- (i+1, 1) -- (i+1, 0) -- cycle;
            filldraw(p, rainbow(boxcolors[i]/6, 1/3));
            label(scale(3/4)*string(labels[i]), (i+1/2, 1/2));
        }
    \end{asy}
    \[\Downarrow\text{\scriptsize{(4c)}}\]
    \begin{asy}[width = 0.96875\textwidth]
        int labels[] = {1, 1, 1, 2, 2, 3, 4, 4, 5, 5, 6, 7, 7, 8, 9, 9, 10, 11, 11};
        for (int i = 0; i < labels.length; ++i) {
            path p = (i, 0) -- (i, 1) -- (i+1, 1) -- (i+1, 0) -- cycle;
            draw(p);
            label(scale(3/4)*string(labels[i]), (i+1/2, 1/2));
        }
        currentpicture = shift((-21, 0)) * currentpicture;
        label("$+$", (-1, 1/2));
        for (int i = 0; i < 6; ++i) {
            path p = (i, 0) -- (i, 1) -- (i+1, 1) -- (i+1, 0) -- cycle;
            draw(p);
            filldraw(p, rainbow(i/6, 1/3));
        }
        int[] boxspecial = {0, 2, 5};
        for (int j = 0; j < boxspecial.length; ++j) {
            int i = boxspecial[j];
            path p = (i, 0) -- (i, 1) -- (i+1, 1) -- (i+1, 0) -- cycle;
            draw(p, linewidth(2));
        }
        currentpicture = shift((-8, 0)) * currentpicture;
        label("$+$", (-1, 1/2));
        int boxcolors[] = {1, 1, 2, 2, 3};
        for (int i = 0; i < boxcolors.length; ++i) {
            path p = (i, 0) -- (i, 1) -- (i+1, 1) -- (i+1, 0) -- cycle;
            draw(p);
            label(scale(3/4)*string(boxcolors[i]), (i+1/2,1/2));
        }
    \end{asy}
    \caption{The bijection in \cref{lemma:coloredstrip}.}
    \label{figure:coloredstripcounting}
\end{figure}

\begin{proof}
    Given a $t$-colored $k$-subset of an $r$-colored $n$-strip, label each box with the number of selected boxes of the same color before it, including itself. Additionally, for each color with a selected box, add an additional box at the beginning. This is shown in step (4b) of \cref{figure:coloredstripcounting}.

    Now:
    \begin{itemize}
        \item let the $(r+k)$-colored $(n+t)$-strip be obtained by forgetting about the colors in the resulting structure and renumbering in ascending order,
        \item let the $t$-element subset of $\{1, 2, \dots, r\}$ be the set of colors in the $t$-colored $k$-subset, and
        \item let the block lengths of the $t$-colored $k$-strip represent the multiplicity of these $t$ colors. 
    \end{itemize}
    This is shown in step (4c) of \cref{figure:coloredstripcounting}, and completes the bijection description.

    This process gives a valid bijection because the process has an inverse: use the $t$-element subset of $\{1, 2, \dots, r\}$ and the $t$-colored $k$-strip to construct the ``colored-numbered strip'' shown in the middle of \cref{figure:coloredstripcounting}, then delete a box labeled ``0'' for each color that contains a nonzero number.
\end{proof}

\begin{corollary}\label{corollary:coloredcyclesubset}
    The total number of $t$-colored $k$-subsets of $r$-colored $n$-cycles is
    \[\binom{n+t-1}{r+k-1} \binom{r}{t} \binom{k-1}{t-1} \frac{n!}{n-r+1}.\]
\end{corollary}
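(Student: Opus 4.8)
The plan is to chain the two bijections just established with the earlier count of sequence-colored cycles, reducing everything to elementary stars-and-bars. First I would apply \cref{lemma:extendedbijection}, which identifies a $t$-colored $k$-subset of an $r$-colored $n$-cycle with a pair consisting of a $t$-colored $k$-subset of an $r$-colored $n$-strip together with an $(s_1, \dots, s_r)$-colored $n$-cycle, where the composition $(s_1, \dots, s_r)$ is read off from the color multiplicities of the strip. The key observation is that by \cref{corollary:seqcoloredcycle} the number of $(s_1, \dots, s_r)$-colored $n$-cycles equals $\frac{n!}{n-r+1}$ independently of which composition $(s_1,\dots,s_r)$ arises, so summing over all $t$-colored $k$-subsets of $r$-colored $n$-strips lets this factor pull cleanly out of the sum:
\[\#\{t\text{-colored }k\text{-subsets of }r\text{-colored }n\text{-cycles}\} = \frac{n!}{n-r+1}\cdot \#\{t\text{-colored }k\text{-subsets of }r\text{-colored }n\text{-strips}\}.\]

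Next I would apply \cref{lemma:coloredstrip} to rewrite the remaining factor as the number of triples consisting of an $(r+k)$-colored $(n+t)$-strip, a $t$-element subset of $\{1, 2, \dots, r\}$, and a $t$-colored $k$-strip, then count each piece separately. An $m$-colored $\ell$-strip is precisely a composition of $\ell$ into $m$ positive parts, of which there are $\binom{\ell-1}{m-1}$; this gives $\binom{n+t-1}{r+k-1}$ strips of the first kind and $\binom{k-1}{t-1}$ strips of the last kind, while the subsets number $\binom{r}{t}$. Multiplying the four factors together yields
\[\binom{n+t-1}{r+k-1} \binom{r}{t} \binom{k-1}{t-1} \frac{n!}{n-r+1},\]
as claimed.

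I do not anticipate any genuine obstacle here: the entire argument is the composition of \cref{lemma:extendedbijection} and \cref{lemma:coloredstrip} with three routine compositions-as-binomials counts and \cref{corollary:seqcoloredcycle}. The only point meriting a sentence of care is the uniformity supplied by \cref{corollary:seqcoloredcycle} — that the cycle count does not depend on the composition — since this is exactly what makes the product over the strip side factor out of the sum over compositions rather than forcing a more delicate double count.
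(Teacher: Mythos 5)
Your argument is correct and is essentially the paper's own proof, which simply combines \cref{lemma:extendedbijection}, \cref{lemma:coloredstrip}, and the cycle count from Section 3, with the strips counted as compositions. Your explicit appeal to \cref{corollary:seqcoloredcycle} (rather than \cref{corollary:coloredcycle}) to note that the factor $\frac{n!}{n-r+1}$ is independent of the composition $(s_1,\dots,s_r)$ is exactly the uniformity the paper's one-line proof implicitly relies on, so there is no substantive difference.
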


\begin{proof}
    Combining \cref{lemma:extendedbijection}, \cref{corollary:coloredcycle}, and \cref{lemma:coloredstrip} gives
    the result.
\end{proof}

\section{Coalescence probability}\label{section:coalescence}

This section will culminate in the proof of the coalescence probability formula, \cref{theorem:coalescence}. The proof is dense and technical; at key points in the proof, the general strategy will be highlighted to increase readability. \cref{lemma:coalescence} and \cref{corollary:coloredcyclesubset} shows that the probability that $1, 2, \dots, m$ are all contained in the same cycle of a product of two random $n$-cycles is
\[\frac{(-1)^n}{(n-1)! \binom{n}{k}}\sum_{t=1}^k \frac{1}{t} \sum_{r=1}^n (-1)^r \binom{n+t-1}{r+k-1} \binom{r}{t} \binom{k-1}{t-1} \frac{n!}{n-r+1},\]
so the proof of \cref{theorem:coalescence} will consist of showing that this expression is equal to
\[\frac{1}{k} + \frac{4 (-1)^n}{ \binom{2k}{k}} \sum_{\substack{1 \leq i \leq k-1 \\ i \not\equiv n \bmod 2}} \binom{2k-1}{k+i} \left(\frac{1}{n+i+1} - \frac{1}{n-i}\right).\]
The proof will require several combinatorial identities (\cref{lemma:identity1} to \ref{lemma:identity4}), the statements and proofs of which will be deferred to the end of this section.

\begin{proof}[Proof of \cref{theorem:coalescence}]
    The first idea is to write the starting expression in a form that absorbs the $\frac{1}{n-r+1}$ factor into a binomial. This is done is by first pushing as many terms outside the summation as possible, then breaking apart the $\smash{\binom{n+t-1}{r+k-1}}$ binomial so that the $n-r+1$ in the numerator cancels with the $\frac{1}{n-r+1}$ factor. Doing this gives
    \begin{align*}
        &\phantom{{}={}}\frac{(-1)^n}{(n-1)! \binom{n}{k}}\sum_{t=1}^k \frac{1}{t} \sum_{r=1}^n (-1)^r \binom{n+t-1}{r+k-1} \binom{r}{t} \binom{k-1}{t-1} \frac{n!}{n-r+1}\\
        &= (-1)^n \frac{n}{\binom{n}{k}}\sum_{t=1}^k \frac{1}{t}\binom{k-1}{t-1} \sum_{r=1}^n (-1)^r \binom{r}{t} \binom{n+t-1}{r+k-1} \frac{1}{n-r+1} \\
        &= (-1)^n \frac{n \binom{n+k}{k}}{\binom{2k}{k}\binom{n+k}{2k}}\sum_{t=1}^k \frac{k-t+1}{kt} \binom{k}{t-1} \sum_{r=1}^n (-1)^r \binom{r}{t}\frac{t\binom{n+k}{r+k-1}\binom{n-r}{k-t}\binom{n+t-1}{t}}{n(k-t+1)\binom{n+k}{k} \binom{k}{t-1}} \\
        &= \frac{(-1)^n}{k\binom{2k}{k}\binom{n+k}{2k}}\sum_{t=1}^k \binom{n+t-1}{t} \sum_{r=1}^n (-1)^r \binom{r}{t}\binom{n+k}{r+k-1}\binom{n-r}{k-t}.
    \end{align*}
    Now, the next goal is to write the expression in a form that makes it obvious that the expression is a rational function in $n$, up to a factor of $(-1)^n$. The idea here is to observe that if the innermost sum's bounds were extended to $1-k$ and $n+1$, then the resulting sum is zero. This observation is proved in \cref{lemma:identity1}. This allows one to replace the innermost summation with the negative of its ``complement'', as follows:
    \begin{align*}
        &\phantom{{}={}}\frac{(-1)^n}{k\binom{2k}{k}\binom{n+k}{2k}}\sum_{t=1}^k \binom{n+t-1}{t} \sum_{r=1}^n (-1)^r \binom{r}{t}\binom{n+k}{r+k-1}\binom{n-r}{k-t}\\
        &\overset{\mathclap{(\ref{lemma:identity1})}}{=}\frac{(-1)^n}{k\binom{2k}{k}\binom{n+k}{2k}}\sum_{t=1}^k \binom{n+t-1}{t} \sum_{\substack{r=n+1 \text{ or}\\ 1-k \leq r \leq 0}} -(-1)^r \binom{r}{t}\binom{n+k}{r+k-1}\binom{n-r}{k-t}.
    \end{align*}
    The innermost sum now consists of an ``upper'' part and a ``lower'' part, and the expression is now more clearly a rational function in $n$, up to a factor of $(-1)^n$. Splitting the sum up into these parts shows that the expression is equal to
    \[\frac{A(n, k) - (-1)^{n}B(n, k)}{k \binom{2k}{k}},\]
    where $A(n, k)$ is the summand at $r=n+1$,
    \begin{align*}
        A(n,k)&=\frac{(-1)^n}{\binom{n+k}{2k}} \sum_{t=1}^k \binom{n+t-1}{t} (-1)^n \binom{n+1}{t} \binom{n+k}{n+k} \binom{-1}{k-t} \\
        &=\frac{(-1)^k}{\binom{n+k}{2k}}\sum_{t=1}^k (-1)^t\binom{n+t-1}{t} \binom{n+1}{t},
    \end{align*}
    and $B(n, k)$ is the expression
    \begin{align*}
        B(n&, k) = \frac{1}{\binom{n+k}{2k}} \sum_{t=1}^k \binom{n+t-1}{t} \sum_{r'=0}^{k-1} (-1)^r \binom{-r'}{t} \binom{n+k}{k-1-r'}\binom{n+r'}{k-t} \\
        &= \frac{1}{\binom{n+k}{2k}}\sum_{t=1}^k (-1)^t \binom{n+t-1}{t} \sum_{r'=0}^{k-1} (-1)^{r'}\binom{r'+t-1}{t}\binom{n+k}{k-1-r'}\binom{n+r'}{k-t}.
    \end{align*}
    
    The next idea is to treat $k$ as fixed and write both $A$ and $B$ in a partial fraction decomposition form. The denominators of $A$ and $B$ are both degree-$2k$ polynomials in $n$ with roots at $k-1, k-2, \dots, -(k-1), -k$, so the coefficients of the partial fraction decomposition can be extracted by substituting $p$ into $(n-p) A(n, k)$ and $(n-p) B(n, k)$ for each root $p$.

    Let's apply this process to $A$ first. By inspection, $A$ can be written as a rational function with a degree-$2k$ numerator and a degree-$2k$ denominator. Thus $A$ is asymptotically constant with constant term
    \[\lim_{n \to \infty} A(n, k) = \lim_{n \to \infty} (-1)^k\frac{(-1)^k \binom{n+k-1}{k} \binom{n+1}{k}}{\binom{n+k}{2k}} = \binom{2k}{k}.\]
    The remaining terms of $A$ can be obtained by substituting $p$ into $(n-p)A(n, k)$ for each $p \in \{k-1, k-2, \dots, -k\}$. Doing this gives
    \begin{align*}
        A(n, k) &= \binom{2k}{k} + (-1)^k \sum_{p=-k}^{k-1} \frac{1}{n-p}\left(\left.\frac{n-p}{\binom{n+k}{2k}}\right|_{n=p} \right)\sum_{t=1}^k (-1)^t\binom{p+t-1}{t} \binom{p+1}{t} \\
        &= \binom{2k}{k} - 2k \sum_{p=-k}^{k-1} \frac{1}{n-p} (-1)^p \binom{2k-1}{k+p} \sum_{t=1}^{k} (-1)^t \binom{p+t-1}{t} \binom{p+1}{t}.
    \end{align*}
    Observe that if the bounds of the innermost sum ran from 0 to $\infty$, the sum would be zero unless $p \in \{-1, 0\}$, in which case the sum would equal one; this observation is \cref{lemma:identity2}. Since all terms with $t \geq k+1$ are zero, the sum can be replaced with the negation of its value at $t=0$, except at $p \in \{-1, 0\}$, for which it can be replaced with one plus this value. Since the summand's value at $t=0$ is 1, this gives
    \begin{align*}
        A(n, k) &= \binom{2k}{k} - 2k \sum_{p=-k}^{k-1} \frac{1}{n-p} (-1)^p \binom{2k-1}{k+p} (- \mathds{1}_{-k \leq p \leq -2\text{ or }1 \leq p \leq k-1}) \\
        &= \binom{2k}{k} + 2k \sum_{p=1}^{k-1} \frac{1}{n-p} (-1)^p \binom{2k-1}{k+p} + 2k \sum_{p'=2}^{k} \frac{1}{n+p'} (-1)^{p'} \binom{2k-1}{k-p'} \\
        &= \binom{2k}{k} + 2k \sum_{i=1}^{k-1} (-1)^i \binom{2k-1}{k+i} \left(\frac{1}{n-i} - \frac{1}{n+i+1}\right),
    \end{align*}
    which is the partial fraction decomposition of $A$.

    Repeating this process on $B$ is more involved. The same partial fraction decomposition trick gives
    \begin{align*}
        B(n, k)&= \sum_{p=-k}^{k-1} \frac{1}{n-p} \left(\left.\frac{n-p}{\binom{n+k}{2k}}\right|_{n=p}\right)B_i(k) \\
        &= (-1)^{k+1} 2k \sum_{p=-k}^{k-1} \frac{1}{n-p} (-1)^{p} \binom{2k-1}{k+p}B_i(k)
    \end{align*}
    where
    \[B_i(k) = \sum_{t=1}^k (-1)^t \binom{p+t-1}{t} \sum_{r'=0}^{k-1} (-1)^{r'}\binom{r'+t-1}{t}\binom{p+k}{k-1-r'}\binom{p+r'}{k-t}.\]
    To evaluate $B_i(k)$, observe that if the outside sum had bounds running from 0 to $k$, then the sum would equal zero unless $p=-1$, in which case it would equal $(-1)^k$. This observation is \cref{lemma:identity3}, and using it gives
    \[B_i(k) = (-1)^{k}\mathds{1}_{p = -1} - \sum_{r'=0}^{k-1} (-1)^{r'} \binom{p+k}{k-1-r'} \binom{p+r'}{k}\]
    by using the negation of the $t=0$ summand. Furthermore, the summation in the above expression simplifies to
    \[(-1)^{k+p} \mathds{1}_{p>0} - (-1)^{k+p} \mathds{1}_{p<0};\]
    this fact is \cref{lemma:identity4}. Thus
    \begin{align*}
        B_i(k) &= (-1)^{k}\mathds{1}_{p = -1} - (-1)^{k+p} \mathds{1}_{p>0} + (-1)^{k+p} \mathds{1}_{p<0} \\
        &= (-1)^{k+p+1}(\mathds{1}_{p>0} - \mathds{1}_{p<-1})
    \end{align*}
    and the partial fraction decomposition of $B(n, k)$ is
    \begin{align*}
        B(n, k) &= 2k\sum_{p=-k}^{k-1}\frac{1}{n-p} \binom{2k-1}{k+p} \left(\mathds{1}_{p>0} - \mathds{1}_{p<-1}\right) \\
        &= 2k \sum_{i=1}^{k-1} \binom{2k-1}{k+i} \left(\frac{1}{n-i} - \frac{1}{n+i+1}\right),
    \end{align*}
    Combining everything shows that the final probability is
    \begin{align*}
        &\phantom{{}={}} \frac{A(n, k) - (-1)^{n}B(n, k)}{k \binom{2k}{k}} \\
        &= \frac{1}{k\binom{2k}{k}}\left(\binom{2k}{k} + 2k \sum_{i=1}^{k-1} \left((-1)^i - (-1)^n\right) \binom{2k-1}{k+i} \left(\frac{1}{n-i} - \frac{1}{n+i+1}\right)\right) \\
        &= \frac{1}{k} + \frac{4 (-1)^n}{ \binom{2k}{k}} \sum_{\substack{1 \leq i \leq k-1 \\ i \not\equiv n \bmod 2}} \binom{2k-1}{k+i} \left(\frac{1}{n+i+1} - \frac{1}{n-i}\right)
    \end{align*}
    as desired.
\end{proof}

It remains to prove the four identities used in the proof above, which are:

\begin{lemma}\label{lemma:identity1}
    Let $t \leq k \leq n$ be positive integers. Then
    \[\sum_{r=1-k}^{n+1} (-1)^r \binom{r}{t}\binom{n+k}{r+k-1}\binom{n-r}{k-t} = 0.\]
\end{lemma}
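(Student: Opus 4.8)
The plan is to reindex the sum so that the alternating factor $(-1)^r\binom{n+k}{r+k-1}$ turns into a complete binomial row acting as an iterated finite-difference operator, and then invoke the elementary fact that the $N$-th finite difference annihilates every polynomial of degree less than $N$.

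First I would substitute $j=r+k-1$, so that $r=j-k+1$ and, as $r$ runs from $1-k$ to $n+1$, the index $j$ runs from $0$ to $n+k$. Under this substitution $\binom{r}{t}=\binom{j-k+1}{t}$, $\binom{n+k}{r+k-1}=\binom{n+k}{j}$, and $\binom{n-r}{k-t}=\binom{(n+k-1)-j}{k-t}$, while $(-1)^r=(-1)^{j-k+1}=(-1)^{k+1}(-1)^j$. Since $\binom{n+k}{j}$ vanishes for $j$ outside $\{0,1,\dots,n+k\}$, the claim becomes
\[(-1)^{k+1}\sum_{j\in\mathbb{Z}}(-1)^j\binom{n+k}{j}\,P(j)=0,\qquad\text{where } P(j):=\binom{j-k+1}{t}\binom{(n+k-1)-j}{k-t}.\]
Here I use the generalized binomial coefficient $\binom{x}{m}=x(x-1)\cdots(x-m+1)/m!$, which is the convention already in force in the paper (e.g.\ $\binom{-1}{k-t}=(-1)^{k-t}$); the point to verify carefully is that each of the three original binomials, for \emph{every} integer $r$ in $[1-k,\,n+1]$ — in particular the negative values of $r$, and the value $n-r=-1$ at $r=n+1$ — agrees with the corresponding polynomial factor, so that the reindexed summand really is $(-1)^j\binom{n+k}{j}$ times one fixed polynomial $P(j)$.

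Finally I would observe that $\binom{j-k+1}{t}$ is a polynomial in $j$ of degree $t$ and $\binom{(n+k-1)-j}{k-t}$ is a polynomial in $j$ of degree $k-t$, so $P$ has degree $t+(k-t)=k$. The displayed sum equals, up to the sign $(-1)^{n+k}$, the iterated forward difference $(\Delta^{\,n+k}P)(0)$, which vanishes because $\deg P=k<n+k$; the strict inequality is exactly where the hypothesis enters, since $1\le t\le k\le n$ forces $n\ge 1$. (If one prefers not to cite the finite-difference fact, it follows by a one-line induction on $\deg P$ using Pascal's rule, with base case $(1-1)^{n+k}=0$.) The main — really the only — obstacle is the conventions bookkeeping flagged in the previous paragraph; everything else is the standard principle that an alternating sum of a polynomial of degree less than $N$ against a complete binomial row of size $N$ is zero.
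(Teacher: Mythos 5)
Your proof is correct, and it takes a genuinely different route from the paper's. Both arguments begin with the same reindexing $j=r+k-1$ (the paper writes $r'$), and both ultimately exploit the same numerical fact, namely that $t+(k-t)=k<n+k$. But where you treat $\binom{j-k+1}{t}\binom{n+k-1-j}{k-t}$ as a single polynomial of degree $k$ in $j$ and kill the sum with the $(n+k)$-th finite difference, the paper instead writes those two binomials as coefficient extractions $[x^t](1+x)^{j-k+1}$ and $[y^{k-t}](1+y)^{n+k-1-j}$, sums the alternating binomial row via the binomial theorem to produce a factor $(y-x)^{n+k}$, and concludes that the coefficient of $x^ty^{k-t}$ vanishes for degree reasons. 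Your bookkeeping concern is resolved in your favor: the paper does use the generalized (polynomial-in-the-top) convention throughout — e.g.\ it evaluates $\binom{-1}{k-t}=(-1)^{k-t}$ at $r=n+1$ and $\binom{-r'}{t}=(-1)^t\binom{r'+t-1}{t}$ at $r\le 0$ — so your $P(j)$ really does agree with the summand at every integer $j$ in range, and $\binom{n+k}{j}$ covers exactly the full row $0\le j\le n+k$. Your version is more elementary and sidesteps the care about infinite expansions that the paper explicitly flags; the paper's coefficient-extraction formulation has the advantage of being the uniform template it reuses for Lemmas \ref{lemma:identity2}--\ref{lemma:identity4}, where the sums do not vanish and a pure degree-count would not suffice.
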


\begin{lemma}\label{lemma:identity2}
    Let $p$ be an integer. Then 
    \[\sum_{t=0}^{\infty} (-1)^t \binom{p+t-1}{t} \binom{p+1}{t}\]
    is $1$ if $p \in \{-1, 0\}$ and zero otherwise.
\end{lemma}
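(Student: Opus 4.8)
\textbf{Proof proposal for \cref{lemma:identity2}.}
The plan is to strip off the alternating sign using upper negation, and then recognize what remains as a Vandermonde convolution. For every integer $p$ one has $\binom{p+t-1}{t} = (-1)^t \binom{-p}{t}$, so the summand $(-1)^t \binom{p+t-1}{t}\binom{p+1}{t}$ is exactly $\binom{-p}{t}\binom{p+1}{t}$. Hence I would first rewrite the quantity in question as $\sum_{t \ge 0} \binom{-p}{t}\binom{p+1}{t}$.

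Next I would invoke the identity $\sum_{t} \binom{a}{t}\binom{b}{t} = \binom{a+b}{b}$, which follows from extracting the constant term of $(1+x)^{a}(1+1/x)^{b} = x^{-b}(1+x)^{a+b}$. The one point that needs care is that this must be used as an honest finite identity rather than a formal power-series manipulation: but for every integer $p$ at least one of $-p$ and $p+1$ is a nonnegative integer (namely $-p \ge 0$ precisely when $p \le 0$, and $p+1 \ge 0$ precisely when $p \ge -1$), so $\binom{-p}{t}\binom{p+1}{t}$ vanishes for all sufficiently large $t$ and the sum is genuinely finite. Taking $a = -p$ and $b = p+1$, so that $a+b = 1$, gives $\sum_{t \ge 0}\binom{-p}{t}\binom{p+1}{t} = \binom{1}{p+1}$.

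Finally I would evaluate $\binom{1}{p+1}$ as $p$ ranges over the integers: it equals $1$ when $p + 1 \in \{0, 1\}$, i.e.\ $p \in \{-1, 0\}$, and it equals $0$ otherwise --- for $p + 1 \ge 2$ because $\binom{1}{j} = 0$ for integers $j \ge 2$, and for $p + 1 < 0$ by the standard convention $\binom{n}{j} = 0$ for $j < 0$. This is precisely the asserted dichotomy.

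The main obstacle is purely one of conventions: one must track binomial coefficients with negative top and/or negative bottom entries carefully so that the Vandermonde step is legitimate as a finite sum. Once the ``at least one factor terminates'' observation is recorded, the argument is short and uniform in $p$; alternatively one could forgo the generating-function identity and simply check the cases $p \le -2$, $p \in \{-1,0\}$, and $p \ge 1$ separately, but this is less clean.
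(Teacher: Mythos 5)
Your argument is correct. The upper negation $\binom{p+t-1}{t}=(-1)^t\binom{-p}{t}$ holds for every integer $p$ and $t\ge 0$; your observation that at least one of $-p$ and $p+1$ is a nonnegative integer keeps the sum finite and makes the constant-term derivation of $\sum_t\binom{-p}{t}\binom{p+1}{t}=\binom{1}{p+1}$ honest (one of the factors $(1+x)^{-p}$, $(1+1/x)^{p+1}$ is a Laurent polynomial, so no doubly infinite series arises --- exactly the pitfall the paper warns about just before these lemmas); and $\binom{1}{p+1}$ evaluates to the stated dichotomy. The paper proves the lemma by the same underlying technique --- writing one binomial as a coefficient via \cref{fact:binomial} and summing with the binomial theorem --- but it splits into two cases rather than arguing uniformly: for $p\le 0$ it performs precisely your upper negation and computes $[x^0](1+x)^{p+1}(1+1/x)^{-p}=[x^{-p}](1+x)$, while for $p\ge 1$ it instead extracts $\binom{p+t-1}{t}=[x^0](1+x)^{p+t-1}x^{-t}$ directly and lands on $(-1)^{p+1}[x^{p+1}](1+x)^{p-1}=0$ for degree reasons. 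Your single application of $\sum_t\binom{a}{t}\binom{b}{t}=\binom{a+b}{b}$ with $a+b=1$ buys a uniform, case-free answer at the cost of having to justify that identity when one of $a,b$ is negative; the paper's version avoids quoting any Vandermonde-type identity but pays with the case split. Both are sound.
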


\begin{lemma}\label{lemma:identity3}
    Let $p$ and $k$ be integers such that $-k \leq p \leq k-1$. Then
    \[\sum_{t=0}^k (-1)^t \binom{p+t-1}{t} \sum_{r'=0}^{k-1} (-1)^{r'}\binom{r'+t-1}{t}\binom{p+k}{k-1-r'}\binom{p+r'}{k-t}\]
    is $(-1)^k$ if $p = -1$ and zero otherwise.
\end{lemma}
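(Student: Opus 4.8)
The plan is to prove the sharper statement that the double sum in question equals $\binom{p+k}{k-1}\binom{p}{k}$ as an identity of polynomials in $p$, and then to recover \cref{lemma:identity3} by specializing. The specialization is immediate: for $-k \le p \le k-1$ the factor $\binom{p+k}{k-1}$ vanishes exactly when $-k \le p \le -2$, and the factor $\binom{p}{k}$ vanishes exactly when $0 \le p \le k-1$, so the product is nonzero only at $p=-1$, where it equals $\binom{k-1}{k-1}\binom{-1}{k} = (-1)^k$.

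Thus the heart of the matter is the identity
\[\sum_{t=0}^{k}(-1)^t\binom{p+t-1}{t}\sum_{r'=0}^{k-1}(-1)^{r'}\binom{r'+t-1}{t}\binom{p+k}{k-1-r'}\binom{p+r'}{k-t} = \binom{p+k}{k-1}\binom{p}{k}.\]
I would start by applying the upper-negation rule $\binom{m+j-1}{j}=(-1)^j\binom{-m}{j}$ to the factors $\binom{p+t-1}{t}$ and $\binom{r'+t-1}{t}$; this absorbs the $(-1)^t$ and rewrites the left side as $\sum_{t,r'}(-1)^{t+r'}\binom{-p}{t}\binom{-r'}{t}\binom{p+k}{k-1-r'}\binom{p+r'}{k-t}$. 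Next I would evaluate the inner sum over $t$ for fixed $r'$: pulling out $\binom{p+r'}{k}$ and writing $\binom{p+r'}{k-t}/\binom{p+r'}{k}$ as a ratio of Pochhammer symbols presents it as $\binom{p+r'}{k}$ times a terminating ${}_3F_2$ evaluated at $1$. That series turns out to be off by one from Saalsch\"utzian, so I would nudge it into classically summable shape --- via a single Gauss contiguous relation, or by splitting off one boundary term --- and thereby get the inner sum as an explicit product of binomials in $p$, $r'$, and $k$. With that in hand, the remaining sum over $r'$ ought to collapse by a single Chu--Vandermonde convolution (after one more upper negation to align the two binomials), producing precisely $\binom{p+k}{k-1}\binom{p}{k}$.

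A second and possibly tidier route is available through \cref{lemma:identity1}. Substituting $r=-r'$ and applying the upper-negation rule shows that the inner sum over $r'$ equals $(-1)^t$ times $\sum_{r=1-k}^{0}(-1)^r\binom{r}{t}\binom{p+k}{r+k-1}\binom{p-r}{k-t}$, which is the ``negative-index segment'' of the alternating sum that \cref{lemma:identity1} asserts is zero over its full range. One can then either evaluate this short fixed-length sum directly or trade it for minus the ``positive-index segment'' and simplify that; the closed form obtained this way feeds back into the outer sum over $t$. I would try both approaches and keep whichever involves the least bookkeeping.

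The main obstacle is the inner sum over $t$, equivalently the near-Saalsch\"utzian ${}_3F_2$: it is one small manipulation shy of a classical evaluation, and the care goes into tracking the correction or boundary term correctly while respecting the hypothesis $-k\le p\le k-1$, under which many of the binomials involved sit at negative or small arguments. Once the closed form $\binom{p+k}{k-1}\binom{p}{k}$ is secured, the stated dichotomy is a one-line check.
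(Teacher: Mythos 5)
Your sharper claim is correct: the double sum equals $\binom{p+k}{k-1}\binom{p}{k}$ identically in $p$ (for fixed $k$ it is a polynomial identity, since every lower binomial index is a fixed nonnegative integer), and your specialization to $-k\le p\le k-1$ is exactly the right one-line check --- indeed the paper's own proof passes through the same evaluation $\binom{p+k}{k-1}\binom{p}{k}=(-1)^k\mathds{1}_{p=-1}$, but there it appears only as a single boundary summand, not as a closed form for the whole sum. Your route is genuinely different from the paper's: the paper flips and swaps the summations, converts one binomial into a coefficient extraction $[y^{k-2}]$ (correcting for the lone boundary case $(r'',t')=(k-1,k)$), packages the remainder into a two-variable generating function, and kills it by noting that $[x^ay^b]\,(1+y)^{k-1-p}/(1+y+xy)=0$ whenever $a>b$; you instead aim for a direct closed-form evaluation by classical finite convolutions, which is more elementary and yields a stronger statement valid for all integers $p$. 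Moreover your plan works out more easily than you anticipate: after your upper negations, the inner $t$-sum for fixed $r'\ge 1$ is
\[\sum_{t=0}^{k}\binom{-p}{t}\binom{r'+t-1}{r'-1}\binom{p+r'}{k-t},\]
and since $\binom{r'+t-1}{r'-1}$ is a polynomial in $t$ of degree $r'-1$, expanding it in the basis $\binom{t}{j}$ and using $\binom{-p}{t}\binom{t}{j}=\binom{-p}{j}\binom{-p-j}{t-j}$ reduces everything to Vandermonde convolutions $\binom{-p}{j}\binom{r'-j}{k-j}$, which vanish because $0\le r'-j<k-j$. Hence the inner sum is $0$ for every $1\le r'\le k-1$ and equals $\binom{p}{k}$ at $r'=0$; no near-Saalsch\"utzian ${}_3F_2$, contiguous relation, or final Chu--Vandermonde is needed, and the outer sum collapses trivially to the $r'=0$ term $\binom{p+k}{k-1}\binom{p}{k}$. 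One caution: your fallback route through \cref{lemma:identity1} does not apply as stated, because that lemma assumes $1\le t\le k\le n$ whereas here the role of $n$ is played by $p\in\{-k,\dots,k-1\}$, and its summation range depends on $n$, so it is not a polynomial identity you may extend for free; the direct evaluation above is the route to keep.
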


\begin{lemma}\label{lemma:identity4}
    Let $p$ and $k$ be integers such that $-k \leq p \leq k-1$. Then
    \[\sum_{r'=0}^{k-1} (-1)^{r'} \binom{p+k}{k-1-r'} \binom{p+r'}{k}\]
    is $(-1)^{k+p}$ if $p$ is positive, zero if $p$ is zero, and $(-1)^{k+p+1}$ if $p$ is negative.
\end{lemma}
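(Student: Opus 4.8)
The plan is to collapse this alternating sum into a single product of binomial coefficients that Vandermonde's convolution can evaluate, then sort out the three sign regimes of $p$. The first move is to split the inner binomial by Vandermonde, $\binom{p+r'}{k}=\sum_{m\ge 0}\binom{p}{k-m}\binom{r'}{m}$, interchange the (finite) sums, and evaluate the $r'$-sum: writing $r'=m+i$ and using the upper-negation rule $\binom{m+i}{i}=(-1)^i\binom{-m-1}{i}$ turns
\[\sum_{r'}(-1)^{r'}\binom{p+k}{k-1-r'}\binom{r'}{m}=(-1)^m\sum_i\binom{p+k}{(k-1-m)-i}\binom{-m-1}{i}=(-1)^m\binom{p+k-m-1}{k-1-m}.\]
Since $p+k\ge 0$ the factor $(1+x)^{p+k}$ is a genuine polynomial, so the Vandermonde instances used are the finite identity $\sum_i\binom{A}{c-i}\binom{B}{i}=\binom{A+B}{c}$, valid for all $A,B$ and all integers $c\ge 0$ (read off $[x^c](1+x)^A(1+x)^B=[x^c](1+x)^{A+B}$). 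This reduces the claim to evaluating $\sum_{m=0}^{k-1}(-1)^m\binom{p}{k-m}\binom{p+k-m-1}{k-1-m}$; the same reduction also drops out of the generating function $[x^{k-1}y^k]\,\tfrac{(1+x)^{p+k}(1+y)^p}{1+(1+y)x}$ after writing $\tfrac{1}{1+(1+y)x}=\tfrac{1}{1+x}\cdot\tfrac{1}{1+xy/(1+x)}$.

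Next I would substitute $l=k-m$ and apply upper-negation once more, $\binom{p+l-1}{l-1}=(-1)^{l-1}\binom{-p-1}{l-1}$, bringing the sum to $(-1)^{k+1}\sum_{l=1}^{k}\binom{p}{l}\binom{-p-1}{l-1}$. Now both factors carry the summation index in the lower slot moving the same way, so Vandermonde does not apply as written; the fix is to reflect whichever factor has a nonnegative upper index via the complement symmetry. If $p\ge 1$, then $\binom{p}{l}=\binom{p}{p-l}$ gives
\[\sum_{l}\binom{p}{p-l}\binom{-p-1}{l-1}=[x^{p-1}](1+x)^p(1+x)^{-p-1}=[x^{p-1}](1+x)^{-1}=(-1)^{p-1},\]
so the whole expression is $(-1)^{k+p}$. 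If $p\le -1$, then $\binom{-p-1}{l-1}=\binom{-p-1}{-p-l}$ gives $\sum_{l}\binom{p}{l}\binom{-p-1}{-p-l}=[x^{-p}](1+x)^{-1}=(-1)^{p}$, so the expression is $(-1)^{k+p+1}$. If $p=0$ the sum is empty since $\binom{0}{l}=0$ for $l\ge 1$, giving $0$. The hypothesis $-k\le p\le k-1$ is precisely what guarantees that truncating each reflected Vandermonde sum at $l=k$ discards nothing, and collecting the three cases yields the asserted trichotomy.

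The step I expect to be the main obstacle is the first: recognizing that one Vandermonde on $\binom{p+r'}{k}$ followed by a second on the resulting $r'$-sum collapses the whole expression to a single clean sum, and then carrying the signs faithfully through the two upper-negations — it is the parity bookkeeping, together with the sign of $p$ deciding which binomial gets reflected, that ultimately manufactures the three-way split. Once the expression is in the form $(-1)^{k+1}\sum_{l}\binom{p}{l}\binom{-p-1}{l-1}$, the remainder is a routine, if sign-sensitive, application of the complement identity and Vandermonde.
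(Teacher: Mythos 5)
Your proof is correct, and it takes a genuinely different route from the paper's, though both live in the same coefficient-extraction world (the paper's \cref{fact:binomial}). The paper reverses the summation index, then extends the range from $\{0,\dots,k-1\}$ to $\{0,\dots,p+k\}$, observing that the only term this can add is the endpoint term at $r''=p+k$, worth $-(-1)^{k+p}$ and present exactly when $p\ge 0$; the extended sum is then evaluated in one shot via the binomial series, ending at $[x^{-p}](1+x)^{-1}$, and the trichotomy in $p$ emerges from the interplay of that boundary correction with the vanishing of $[x^{-p}](1+x)^{-1}$ for $p>0$. You instead collapse the expression by two Vandermonde convolutions (first expanding $\binom{p+r'}{k}$, then summing out $r'$ after an upper negation) down to the one-variable sum $(-1)^{k+1}\sum_{l=1}^{k}\binom{p}{l}\binom{-p-1}{l-1}$, and only then split cases, letting the sign of $p$ decide which binomial gets complemented before a final Vandermonde, which again lands on a coefficient of $(1+x)^{-1}$. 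Your sign bookkeeping checks out at every step (the upper negations $\binom{m+i}{i}=(-1)^i\binom{-m-1}{i}$ and $\binom{p+l-1}{l-1}=(-1)^{l-1}\binom{-p-1}{l-1}$ are applied correctly), and your observation that the hypothesis $-k\le p\le k-1$ makes the truncation at $l=k$ lossless is exactly the right justification: for $p\ge 1$ all nonzero terms have $l\le p\le k-1$, and for $p\le -1$ they have $l\le -p\le k$. What the paper's version buys is a uniform argument with no case distinction until the final line; what yours buys is a transparent mechanism for the three-way answer, since the case split is visibly forced by which factor admits the reflection $\binom{a}{b}=\binom{a}{a-b}$.
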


The proofs of these four lemmas are similar. To prove these lemmas, first define $[x^k]f(x)$ to be the coefficient of $x^k$ in $f(x)$'s Laurent expansion, where $k$ is an integer and $f$ is a rational function. The four proofs rely on the following fact:

\begin{fact}\label{fact:binomial}
    Let $n$ and $k$ be integers. Then
    \[[x^k] (1+x)^n = \begin{cases}\binom{n}{k} & k \geq 0 \\ 0 & k < 0.
    \end{cases}\]
\end{fact}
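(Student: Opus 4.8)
The plan is to read the claim straight off the definition of the Laurent expansion of a rational function about $0$, splitting into the two cases $n \ge 0$ and $n < 0$; both cases are genuinely needed downstream, since Lemmas~\ref{lemma:identity1}--\ref{lemma:identity4} invoke binomial coefficients with negative upper arguments through identities such as $\binom{-m}{j} = (-1)^j\binom{m+j-1}{j}$.

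The first step is to record that $(1+x)^n$ is, for every integer $n$, a rational function that is regular and nonvanishing at $x = 0$: when $n \ge 0$ it is a polynomial with constant term $1$, and when $n < 0$ it equals $1/(1+x)^{-n}$, whose only pole is at $x = -1$. Hence its Laurent expansion about $0$ is an honest power series with no negative-degree terms, which is exactly the assertion that $[x^k](1+x)^n = 0$ for $k < 0$.

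The second step handles $k \ge 0$. I would extract the coefficient by Taylor's formula, using $\frac{d^k}{dx^k}(1+x)^n = n(n-1)\cdots(n-k+1)(1+x)^{n-k}$, which holds for every integer $n$, so that $[x^k](1+x)^n = \frac{1}{k!}\,n(n-1)\cdots(n-k+1)$; this is $\binom{n}{k}$ under the convention---used throughout the paper---that $\binom{n}{k} = \frac{n(n-1)\cdots(n-k+1)}{k!}$ for $k \ge 0$. For $n \ge 0$ this is just the binomial theorem, and for $n < 0$ it is the generalized binomial series (equivalently, upper negation $\binom{n}{k} = (-1)^k\binom{k-n-1}{k}$), convergent on $|x| < 1$.

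There is essentially no obstacle here: the content of the fact is purely organizational, namely that the expansion is taken at $0$ rather than at the pole $x = -1$, so that the negative-degree part of the Laurent series is empty. The only point requiring a moment's care is that $\binom{n}{k}$ must be read as the falling-factorial quotient (so that negative $n$ is allowed), not as a count of subsets; once that is fixed, the two displayed cases are immediate.
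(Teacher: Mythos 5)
Your argument is correct and takes essentially the same route as the paper, which simply cites the generalized binomial theorem; your additional remarks about regularity at $x=0$ (so no negative-degree Laurent terms) and the falling-factorial reading of $\binom{n}{k}$ just spell out the content of that citation.
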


\begin{proof}
    This follows from the generalized binomial thoerem.
\end{proof}

To prove Lemmas \ref{lemma:identity1} to \ref{lemma:identity4}, the general method of attack is as follows:
\begin{itemize}
    \item Rewrite the binomials and adjust the bounds of summation so that one binomial has the variable of summation on the bottom.
    \item Use \cref{fact:binomial} to write all other binomials as Taylor coefficients.
    \item Rearrange the expression such that the coefficient extractions are done last.
    \item Use the binomial theorem to simplify the expression.
    \item Make an observation about the resulting Taylor coefficient.
\end{itemize}
In many cases, this outline must be modified. For example, the first step in the method would require two binomials for double summations: one for each variable of summation. Additionally, one must be careful when using \cref{fact:binomial}, since the Taylor coefficient conversion is only valid when the bottom number in the binomial is nonnegative. Lastly, it could be difficult to extract the final Taylor coefficient. In this case, it sometimes works to replace some $\binom{n}{k}$'s with $\binom{n}{n-k}$'s and retry the method.

When working with infinite sums, it is important to make sure the exponents on negative terms in expansions are bounded. Otherwise, one can derive an apparent contradiction from taking, for example, the constant coefficient on each side of
\[1 + x^{-1} + x^{-2} + \cdots = \frac{1}{1-\frac{1}{x}} -\frac{x}{1-x} = -x - x^2 - x^3 - \cdots.\]
With these heuristics in mind, here are the proofs of the four lemmas.

\begin{proof}[Proof of \cref{lemma:identity1}]
    By adjusting the bounds of summation, it suffices to show that
    \[\sum_{r'=0}^{\infty} (-1)^{r'} \binom{k+n}{r'}\binom{r'-k+1}{t}\binom{k+n-r'-1}{k-t} = 0.\]
    Indeed, by \cref{fact:binomial} the left side equals
    \begin{align*}
        &\phantom{{}={}}\sum_{r'=0}^{\infty} (-1)^{r'} \binom{k+n}{r'} [x^t] (1+x)^{r'-k+1} [y^{k-t}] (1+y)^{k+n-r'-1} \\
        &= [x^t][y^{k-t}](1+x)^{-k+1} (1+y)^{k+n-1}\sum_{r'=0}^\infty \binom{k+n}{r'}(-1)^{r'} (1+x)^{r'}(1+y)^{-r'} \\
        &=  [x^t][y^{k-t}](1+x)^{-k+1} (1+y)^{k+n-1}\left(1-\frac{1+x}{1+y}\right)^{k+n} \\
        &= [x^t][y^{k-t}](1+x)^{-k+1} (1+y)^{k+n-1}(y-x)^{k+n} (1+y)^{-(k+n)}.
    \end{align*}
    Due to the $(y-x)^{k+n}$ term, every term in the Taylor expansion must have degree at least $k+n$. Thus, the $x^t y^{k-t}$ coefficient must be zero.
\end{proof}

\begin{proof}[Proof of \cref{lemma:identity2}]
    There are two cases: $p \geq 1$ and $p \leq 0$. If $p \geq 1$, then using \cref{fact:binomial} gives
    \begin{align*}
        \sum_{t=0}^{\infty} (-1)^t \binom{p+t-1}{t} \binom{p+1}{t} &= \sum_{t=0}^{p+1} (-1)^t [x^0]\frac{(1+x)^{p+t-1}}{x^t} \binom{p+1}{t} \\
        &= [x^0] (1+x)^{p-1} \sum_{t=0}^{p+1} \binom{p+1}{t}(-1)^t \frac{(1+x)^t}{x^t} \\
        &= [x^0] (1+x)^{p-1} \left(1 - \frac{1+x}{x}\right)^{p+1} \\
        &= (-1)^{p+1}[x^{p+1}] (1+x)^{p-1}
    \end{align*}
    which is zero for degree reasons. If $p \leq 0$, then rewriting the first binomial coefficient and applying \cref{fact:binomial} gives
    \begin{align*}
        \sum_{t=0}^\infty (-1)^t \binom{p+t-1}{t} \binom{p+1}{t} &= \sum_{t=0}^{-p} \binom{-p}{t} \binom{p+1}{t} \\
        &= \sum_{t=0}^{-p} \binom{-p}{t} [x^0] \frac{(1+x)^{p+1}}{x^t} \\
        &= [x^0](1+x)^{p+1} \sum_{t=0}^{-p} \binom{-p}{t}\frac{1}{x^t} \\
        &= [x^0](1+x)^{p+1} \left(1+\frac{1}{x}\right)^{-p} \\
        &= [x^{-p}] (1+x),
    \end{align*}
    which is zero when $p \leq -2$ and 1 when $p \in \{-1, 0\}$.
\end{proof}

\begin{proof}[Proof of \cref{lemma:identity3}]
    This proof is only sketched because the details are messy and the ideas have already been written out in the proofs of the other lemmas.
    
    By flipping the direction of summation and swapping the order, the sum becomes
    \[-\sum_{r''=0}^{k-1}(-1)^{r''}\binom{p+k}{r''}\sum_{t'=0}^k (-1)^{t'} \binom{p+k-1-r''}{t'}\binom{p+k-t'-1}{k-t'}  \binom{2k-2-r''-t'}{k-2-r''}\]
    after simplification.
    Since \cref{fact:binomial} and the bounds $0 \leq r'' \leq k-1$, $0 \leq t' \leq k$ give
    \[[y^{k-2}] y^{r''} (1+y)^{2k-2-r''-t'} = \begin{cases}
        \binom{2k-2 - r'' - t'}{k-2-r''} & (r'', t') \neq (k-1, k) \\
        0 & (r'', t') = (k-1, k),
    \end{cases}\]
    the desired sum can be expressed as a sum of the $(r'', t') = (k-1, k)$ summand and a sum over Taylor coefficients. The $(r'', t') = (k-1, k)$ summand equals
    \[\binom{p+k}{k-1}\binom{p}{k} = (-1)^{k}\mathds{1}_{p=-1},\]
    so the entire sum simplifies to
    \[(-1)^{k}\mathds{1}_{p=-1} -[x^k][y^{k-2}] (1+x)^{p+k-1} (1+y)^{2k-2} P_{p,k}(x, y),\]
    where $P_{p,k}(x, y)$ is the series
    \[\sum_{r''=0}^{\infty} (-1)^{r''} \binom{p+k}{r''} y^{r''} (1+y)^{-r''}\sum_{t'=0}^{\infty} (-1)^{t'} \binom{p+k-1-r''}{t'} x^{t'}(1+x)^{-t'} (1+y)^{-t'}.\]
    Simplfying this expression by using the binomial theorem twice gives
    \[P_{p,k}(x, y) = \frac{1}{(1+y+xy)(1+x)^{p+k-1}(1+y)^{p+k-1}}\]
    so the final sum becomes
    \[(-1)^{k}\mathds{1}_{p=-1}-[x^k][y^{k-2}] \frac{(1+y)^{k-1-p}}{1+y+xy}.\]
    Finally, \[[x^a y^b]\frac{(1+y)^{k-1-p}}{1+y+xy}\] is zero when $a > b$, since each $x$ comes paired with a $y$. Thus, the final sum equals $(-1)^{k}\mathds{1}_{p=-1}$, as desired.
\end{proof}

\begin{proof}[Proof of \cref{lemma:identity4}]
    By flipping the direction of summation, the sum is equal to
    \[\sum_{r''=0}^{k-1} (-1)^{k-1-r''} \binom{p+k}{r''} \binom{p+k-1-r''}{k}.\]
    Now, if the summation ranged from 0 to $p+k$, the sum's value would
    \begin{itemize}
        \item not change if $p+k \leq k-1$, and
        \item increase by the value of the summand at $r''=p+k$ if $p+k > k-1$.
    \end{itemize}
    The value of the summand when $r''=p+k$ is
    \[(-1)^{k-1-(p+k)}\binom{p+k}{p+k}\binom{(p+k-1)-(p+k)}{k} = -(-1)^{k+p}.\]
    Thus the final sum, by \cref{fact:binomial} and the binomial theorem, equals
    \begin{align*}
        &\phantom{{}={}} (-1)^{k+p} \mathds{1}_{p \geq 0} + \sum_{r''=0}^{p+k} (-1)^{k-1-r''} \binom{p+k}{r''} \binom{p+k-1-r''}{k}
        \\
        &= (-1)^{k+p}\mathds{1}_{p \geq 0}-(-1)^{k}\sum_{r''=0}^{p+k} (-1)^{r''} \binom{p+k}{r''} [x^k](1+x)^{p+k-1-r''} \\
        &= (-1)^{k+p}\mathds{1}_{p \geq 0} - (-1)^k [x^k] (1+x)^{p+k-1}\sum_{r''=0}^{p+k}\binom{p+k}{r''} (-1)^{r''}(1+x)^{-r''} \\
        &= (-1)^{k+p}\mathds{1}_{p \geq 0} - (-1)^k [x^k] (1+x)^{p+k-1} \left(1 - \frac{1}{1+x}\right)^{p+k} \\
        &= (-1)^{k+p}\mathds{1}_{p \geq 0} - (-1)^k [x^{-p}] (1+x)^{-1} \\
        &= (-1)^{k+p}\mathds{1}_{p \geq 0} - (-1)^{k+p} \mathds{1}_{p \leq 0},
    \end{align*}
    as desired.
\end{proof}

\section{Future directions}\label{section:future}

This paper studies coalescence probabilities, and \cite{BDMS14} studied separation probabilities. The natural generalization is, given a partition $\lambda$ of $k$, what is the probability that the elements $\{1, 2, \dots, k\}$ are distributed according to $\lambda$ among the cycles of a product of two random $n$-cycles? Coalescence probabilities correspond to the partition $\lambda = (k)$, and separation probabilities correspond to the partition $\lambda = (1, 1, \dots, 1)$. Stanley \cite[p.~49]{Sta10} claimed that for sufficiently large $n$, this probability is a rational function in $n$ for each possible parity of $n$. However, he did not give a formula for this probability. Perhaps calculating the partial fraction decomposition of these rational functions could lead to new insights into the behavior of these probabilities.

\section{Acknowledgments}
This work was supported by MIT's UROP+ program under the guidance of mentor Oriol Sol\'e Pi. This research topic was inspired by an MIT class project on randomly gluing the edges of a $2n$-gon to form a closed orientable surface. Hence, I would also like to thank 18.821 professor Roman Bezrukavnikov, writing advisor Susan Ruff, and collaborators Kenta Suzuki and Joseph Camacho for the thought-provoking discussions that led me to choose this topic for my summer research.

\clearpage

\bibliographystyle{amsplain}
\bibliography{references}

\end{document}